\documentclass[11pt]{article}
\usepackage{amssymb, amsmath, amsthm, amsfonts,enumerate,appendix}
\usepackage{amsmath,setspace,scalefnt}
\usepackage[dvipsnames,svgnames,table]{xcolor}
\usepackage{graphicx,tikz,caption,subcaption}
\usepackage{fullpage}
\usepackage{hyperref}
\usepackage{relsize}
\usepackage{enumitem,verbatim}
\usetikzlibrary{decorations.pathmorphing}
\usepackage[left=2.2cm, right=2.2cm, top=2.1cm, bottom=2.3cm]{geometry}

\makeatletter
\newcommand{\labelinthm}[1]{%
   \label{temp#1}
   \protected@write \@auxout {}{\string \newlabel{#1}{{\emph{\ref{temp#1}}}{\thepage}{\emph{\ref{temp#1}}}{temp#1}{}} }%
}
\makeatother

\newcommand{\claimproofstart}[1][Proof]{\begin{proof}[#1]
\renewcommand{\qedsymbol}{$\boxdot$}}
\newcommand\claimproofend{
\end{proof}
\renewcommand{\qedsymbol}{$\square$}}

\tikzset{snake it/.style={decorate, decoration=snake}}
\usetikzlibrary{calc}

\newtheorem{theorem}{Theorem}[section]
\newtheorem{lemma}[theorem]{Lemma}

\newtheorem{conjecture}[theorem]{Conjecture}

\theoremstyle{definition}

\newtheorem{claim}{Claim}

\newcommand{\eps}{\varepsilon}

\renewcommand{\P}{\mathbb{P}}

\date{}

\title{Nearly-uniform degree distributions in spanning subgraphs}
\author{Richard Montgomery\thanks{Mathematics Institute, University of Warwick, Coventry, UK. Research supported by the European Research Council (ERC) under the European Union Horizon 2020 research and innovation programme (grant agreement No.\ 947978). Email: {\tt richard.montgomery@warwick.ac.uk}.}
 \and Alexey Pokrovskiy\thanks{Department of Mathematics, University College London, London, UK. {Email}: {\tt dralexeypokrovskiy@gmail.com.}} \and Benny Sudakov\thanks{
Department of Mathematics, ETH, Z\"urich, Switzerland. Research supported in part by SNSF grant 200021-228014. {Email}: {\tt benjamin.sudakov@math.ethz.ch}.
}}

\begin{document}
\maketitle

\begin{abstract}
We show that, when $d=o(n)$, every $d$-regular $n$-vertex graph
contains a spanning subgraph whose degree distribution is nearly
uniform, i.e., for each $0\leq i\leq d$, there are
$(1+o(1))n/(d+1)$ vertices with degree $i$. This proves a conjecture of Alon and Wei on irregular subgraphs and strengthens a previous result of Fox,
Luo and Pham.
\end{abstract}

\section{Introduction}\label{sec:intro}

Many problems in graph theory fit into the following general meta-problem.
Any weight distribution on the edges of a graph $G$ produces a natural weighted degree distribution on its vertices
via the total weight on the neighbouring edges of each vertex. Can, then, edge weights be chosen for $G$ (often from a small set of integers) so that the corresponding weighted degrees satisfy some prescribed property? Desirable properties include that adjacent vertices receive different weighted degrees, that all the weighted degrees are distinct, that the
weighted degrees belong to prescribed sets, or that the distribution of
weighted degrees has a particular form.

One prominent example is the 1--2--3-Conjecture of Karo\'nski,
{\L}uczak and Thomason~\cite{karonski2004edge} from 2004. That is, can 
the edges of a graph be assigned weights from $\{1,2,3\}$ so that any
adjacent vertices receive different weighted degrees?  This problem attracted a great deal of attention (see, e.g.,~\cite{addario2007vertex,kalkowski2010vertex}), until it was recently
solved when Keusch~\cite{keusch2024solution} confirmed that such an
assignment exists for every graph without isolated edges.

The topic of the 1--2--3-Conjecture can be interpreted as a local version of the more classical notion of \emph{irregularity strength}, itself a more global example of edge-to-vertex weighting problems. 
Here one asks for an
edge weighting which distinguishes all of the vertices via their weighted degrees simultaneously. As introduced by Faudree, Schelp, Jacobson and Lehel~\cite{faudree1989irregular} in 1987, the
irregularity strength $s(G)$ of a graph $G$ is the least integer $k$
such that the edges of $G$ can be assigned weights from $\{1,\dots,k\}$
so that all of the resulting weighted degrees are distinct (taking $s(G)=\infty$ if no such $k$ exists).  
For a $d$-regular graph on $n$ vertices, a simple counting argument gives
$s(G)\geq n/d$, and Faudree and Lehel~\cite{faudree1987bound}
conjectured in 1987 that this lower bound is sharp up to an additive constant when $d\geq 2$: i.e., 
there is an absolute constant $c$ such that $s(G)\leq n/d+c$ for every
$d$-regular graph $G$.  This conjecture has attracted considerable
attention, with recent progress including an asymptotic solution by Przyby{\l}o and Wei~\cite{przybylo2023general,przybylo2023short}.

The most classical setting for edge-to-vertex weighting questions is when the allowed edge
weights are $0$ and $1$.  In this case, the edge weightings are equivalent
to the spanning subgraphs of the graph, a correspondence given when the edges are weighted $1$ whenever they are included in the subgraph, and $0$ otherwise. The weighted degrees are
then precisely the ordinary degrees in the subgraph.  Thus the problem
becomes that of finding a spanning subgraph whose degrees obey some
specified restrictions. This setting, then, includes the theory of factors, going back
to the celebrated factor theorems of Tutte~\cite{Tutte1952Factors} and Lov\'asz~\cite{Lovasz1970Subgraphs}, as well as more
general degree-constrained subgraph problems.  One interesting example
is a result of Shirazi and Verstra\"ete~\cite{shirazi2008note}.  They
proved, using algebraic methods, that if each vertex $v$ in a graph $G$ is given a set
$F(v)\subset \{0,1,\dots,d_G(v)\}$ of size greater than $d_G(v)/2$,
then there is a spanning subgraph $H$ with $d_H(v)\in F(v)$ for every
$v\in V(G)$.

In this paper, we consider another very natural problem in the $0/1$
setting, as posed by Alon and Wei~\cite{alon2023irregular}.  Instead of
prescribing each individual vertex degree, one asks
for a spanning subgraph whose degree distribution is as uniform as
possible. Let $G$ be a $d$-regular graph on $n$ vertices, and let $H$
be a spanning subgraph of $G$.  Since every degree in $H$ lies in
$\{0,1,\dots,d\}$, the best possible outcome would be that these $d+1$ degree
values occur almost equally often.  Equivalently, how close can the subgraph $H$ come to being maximally irregular, i.e., to having its vertex degrees spread evenly over all available degrees?  Alon and Wei
made the following conjecture.

\begin{conjecture}\label{conj:AWexact}
	Every $d$-regular graph on $n$ vertices contains a spanning subgraph
	$H$ such that
	\[
	\left|\#\{v\in V(H):d_H(v)=k\}-\frac{n}{d+1}\right|\leq 2
	\qquad\text{for all }0\leq k\leq d.
	\]
\end{conjecture}

\noindent As pointed out by P\'{a}lv\"{o}lgyi (see~\cite{alon2023irregular}),
this would be tight.  Alon and Wei~\cite{alon2023irregular} also
proposed the following asymptotic version of Conjecture~\ref{conj:AWexact}.

\begin{conjecture}\label{conj:AWasympt}
	Let $d=o(n)$.  Every $d$-regular graph on $n$ vertices contains a
	spanning subgraph $H$ such that
	\[
	\#\{v\in V(H):d_H(v)=k\}=(1+o(1))\frac{n}{d+1}
	\qquad\text{for all }0\leq k\leq d.
	\]
\end{conjecture}

\noindent
The assumption $d=o(n)$ is necessary here: allowing
$d=\Omega(n)$ in Conjecture~\ref{conj:AWasympt} corresponds in this case to replacing the constant 2 in Conjecture~\ref{conj:AWexact} by $o(1)$, an impossible strengthening.

A natural approach to Conjecture~\ref{conj:AWasympt} is to use the
\emph{irregular random subgraph} introduced by Frieze, Gould,
Karo\'nski and Pfender~\cite{frieze2002graph} in the context of
irregularity strength.  To define the irregular random subgraph of a graph $G$, assign to each vertex $v\in V(G)$
an independent uniformly random variable $x_v\in[0,1]$, and include an
edge $uv\in E(G)$ precisely when
\[
x_u+x_v\geq 1.
\]
When $G$ is $d$-regular with $n$ vertices, this random subgraph has the useful property
that, for every $0\leq k\leq d$, the expected number of vertices of
degree $k$ is exactly $n/(d+1)$.  The results of Frieze, Gould,
Karo\'nski and Pfender~\cite{frieze2002graph} already implied
that Conjecture~\ref{conj:AWasympt} holds in the range
$d=o((n/\log n)^{1/4})$. Analysing the same random construction more carefully has proven fruitful. Indeed, using this construction, Alon and Wei~\cite{alon2023irregular} extended the range in which Conjecture~\ref{conj:AWasympt}
 was known to $d=o((n/\log n)^{1/3})$ and then Fox, Luo and
Pham~\cite{fox2024random} further extended it to
\[
d=o\left(\frac{n}{(\log n)^{12}}\right).
\]
In a different direction, Ma and Xie~\cite{ma2025finding} recently used
a deterministic local-adjustment method to make progress on the exact
version, Conjecture~\ref{conj:AWexact}, proving it with the constant $2$
replaced by $2d^2$. This gives another proof of Conjecture~\ref{conj:AWasympt}
in the range $d=o(n^{1/3})$.

Our main result proves the asymptotic conjecture of Alon and Wei in the
full range $d=o(n)$.

\begin{theorem}\label{thm:AWasympt}
	For each $\eps>0$, there is some $\lambda>0$ such that, for every
	$d\leq \lambda n$, every $d$-regular graph on $n$ vertices contains a
	spanning subgraph $H$ such that
	\begin{equation}\label{eq:balanceddegrees}
		\#\{v\in V(H):d_H(v)=k\}=(1\pm \eps)\frac{n}{d+1}
		\qquad\text{for all }0\leq k\leq d.
	\end{equation}
\end{theorem}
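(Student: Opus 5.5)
We will build on the irregular random subgraph of Frieze, Gould, Karo\'nski and Pfender, adding a final correction step. Let $G$ be $d$-regular on $n$ vertices with $d\le\lambda n$. If $d$ is small, say $d\le n^{1/4}$, the known results (Frieze--Gould--Karo\'nski--Pfender, Alon--Wei, Fox--Luo--Pham) already give the theorem, so assume $d$ is large. Since $d\le \lambda n$, each degree class should have size about $n/(d+1)\ge 1/(2\lambda)$, which is a large constant.

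First, group the degree values into $m=\lceil 1/\eps^2\rceil$ consecutive intervals $I_1,\dots,I_m$ of degrees, each of length about $\eps^2 d$. Sample $x_v\in[0,1]$ and let $H_0$ be the irregular random subgraph. The degree of $v$ in $H_0$ is a sum of $d$ independent indicators $\mathbf 1[x_u\ge 1-x_v]$ given $x_v$. So $d_{H_0}(v)=(1\pm o(1))x_vd$ with high probability, with fluctuations of order $\sqrt d$. Note the $x_v$ are exactly uniform. A bounded-differences argument (Azuma over the $x_v$, or Talagrand since changing $x_v$ affects only $d+1$ degrees) then shows that for every interval $I_j$ the number of vertices with $d_{H_0}(v)\in I_j$ is $(1\pm\eps^3)|I_j|n/(d+1)$ with high probability. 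The concentration error is of order $\sqrt{nd}$, which is $o(n)$ whenever $d=o(n)$. So the coarse distribution is correct in the full range, and the real difficulty is only the fine, within-interval distribution.

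Second, to fix the fine distribution, we will not use the random $H_0$ degrees directly. Instead we use $H_0$ to assign each vertex a coarse bucket, and then make a local adjustment. Within each bucket we use a \emph{target} assignment: choose targets $t_v$, uniformly spread over the $|I_j|$ values, for the vertices whose degree lands in $I_j$. We then need a spanning subgraph $H$ with $d_H(v)=t_v$ for almost all $v$.

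To realise the targets, note that $|t_v-d_{H_0}(v)|\le \eps^2 d$. We reserve a random sparse subset $R\subset E(G)$, each edge taken independently with probability $p=\eps$, before building $H_0$. We use $G-R$ for $H_0$, with the $x$-threshold rescaled. We then want to add or remove edges of $R$ so that each vertex's degree moves by exactly $t_v-d_{H_0}(v)$. This is a degree-constrained subgraph problem. The required change at $v$ lies in $[-\eps^2 d,\eps^2 d]$, while $d_R(v)\approx\eps d$. The freedom of choosing $t_v$ within the bucket lets us make the demand pattern near-"balanced". We would then verify a Tutte/Lov\'asz $(g,f)$-factor condition fractionally, via a flow or max-flow-min-cut argument on $R$, allowing each vertex a slack of $\pm 1$ plus a small exceptional set. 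Since $R$ is random, every cut $(S,T)$ has $e_R(S,T)\approx\eps e_G(S,T)$, while the demand discrepancy across the cut is at most $\eps^2 d|S|$. This makes the cut conditions hold except on sets where $G$ has few edges leaving, and those we would charge to the exceptional vertices. Vertices missing their target (an $\eps^3$ fraction) only perturb counts by $\eps n/(d+1)$ per value, provided we spread the failures. We guarantee this by choosing the targets randomly, so failures are uniformly distributed over values.

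The main obstacle is the second step: justifying that the factor problem on $R$ is solvable for all but $o(n/(d+1))$ vertices per degree value. Parity and cut obstructions in arbitrary regular graphs are delicate. Also, the per-value accuracy needed is $\eps n/(d+1)$, which may be a constant, so the exceptional set must be tiny. My first attempt would be a fractional flow solution followed by rounding along alternating paths/cycles in $R$. This loses at most $1$ at each endpoint, and we absorb those losses by pairing vertices with targets chosen at adjacent values.
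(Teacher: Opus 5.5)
Your overall architecture (irregular random subgraph, coarse distribution first, then a correction step) matches the paper's. However, your Step 2 carries the whole difficulty of the theorem, and what you give there is a programme rather than a proof; as sketched it does not go through.

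You need $d_H(v)=t_v$ exactly for all vertices outside an exceptional set that puts at most about $\eps n/(d+1)$ vertices on any single degree value. When $d=\Theta(\lambda n)$ that is a bounded number per value, so a slack of $\pm1$ at every vertex, or at the endpoints of an uncontrolled number of rounding paths, is fatal. Your proposed tools do not give this control:
\begin{itemize}
\item In a non-bipartite graph the exact-degree problem is a $b$-matching problem. Its flow relaxation is not integral, and feasibility is governed by Tutte's $f$-factor condition with its parity/odd-component term, not by cut conditions. Fractional solutions supported on odd cycles cannot be rounded without an error at some vertex of each cycle.
\item The random reservoir $R$ gives no concentration of $e_R(S,T)$ across sparse cuts of $G$, which is exactly where such obstructions live.
\item Choosing the $t_v$ at random does not spread the failures evenly over degree values. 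The failure set is produced by the factor problem, so it depends on the targets.
\item The reservoir mechanism is not coherent as written. If $H_0\subset G-R$ you can only add $R$-edges, so negative corrections are unavailable. Meanwhile a vertex with target near $d$ must take almost all of its roughly $\eps d$ edges in $R$, contradicting $|t_v-d_{H_0}(v)|\le\eps^2 d$.
\end{itemize}

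The missing idea is a way to realise the corrections with no global factor problem at all, by pushing every side effect onto a small random set of vertices and into a sparse set of shifts. The paper first uses only coarse properties of $H_0$ to build $\psi$ with four properties: $|\psi(v)|\le d^{3/4}$; $d_{H_0}(v)-\psi(v)$ is nearly uniform; $\psi(v)>0$ only if $v$ has at least $d/2$ edges in $H_0$; and $\psi(v)<0$ only if $v$ has at least $d/2$ edges in $G\setminus H_0$.

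It then orients $G$ at random and takes a sparse random set $Z$ (density $\eps/t^5$ with $t=\lfloor\log_2 d\rfloor$, suitably filtered). It processes each $z\in Z$ in turn. Among the edges of $H_0$ (respectively $G\setminus H_0$) directed into $z$ from vertices whose correction is unfinished, it deletes (respectively adds) a uniformly random set. The size of that set is the largest element of $I=\{0,1,2,4,\dots,2^t\}$ not exceeding the number of such edges. Each unfinished vertex is then served with probability at least $1/2$ at each of its at least $d^{0.8}$ relevant out-neighbours in $Z$. Azuma's inequality therefore gives that every $v$ has its out-degree changed by exactly $-\psi(v)$. Vertices outside $Z$ hit their targets exactly. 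Each vertex of $Z$ is displaced by an element of $I-I$, a set of size $O(\log^2 d)$, and the choice of $Z$ makes this harmless for every degree value.

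Separately, your concentration claim is mis-justified. McDiarmid's inequality for a $(d+1)$-Lipschitz function of $n$ variables only gives deviations of order $d\sqrt n$, not $\sqrt{nd}$, which is useless for $d=\Theta(\lambda n)$. The coarse statement should instead be proved as in Lemma~\ref{lem:propsofrandomirregular}: use $|d_{H_0}(v)-x_vd|\le d^{3/5}$ for all $v$, together with Chernoff bounds on the number of $x_v$ in each interval.
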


\noindent
Since $\eps>0$ is arbitrary, Theorem~\ref{thm:AWasympt} immediately
implies Conjecture~\ref{conj:AWasympt}. It would be interesting to
understand how small the error parameter $\eps$ can be made using our
methods, or refinements of them. 
On the other hand, Conjecture~\ref{conj:AWexact} remains open. If it is true, then obtaining such a flat distribution on the degrees in a subgraph seems likely to require significant additional structural ideas beyond our current techniques. 
This is because our methods are in part probabilistic, and here this introduces an error that appears very hard to correct locally.

The irregular random
subgraph is used in our proof, but, unlike the previous papers using it, we then modify it to get a graph satisfying \eqref{eq:balanceddegrees}.  Indeed, as
noted by Fox, Luo and Pham~\cite{fox2024random}, the irregular random
subgraph itself is not expected to satisfy \eqref{eq:balanceddegrees} in
the whole range $d=o(n)$.  Therefore, after taking a typical irregular
random subgraph $H_0\subset G$, we find a well-behaved correction function
$\psi:V(G)\to\mathbb Z$ such that the quantities
$d_{H_0}(v)-\psi(v)$ already have the desired nearly-uniform
distribution.  The function $\psi$ is chosen so that no vertex requires a
large correction, and so that a vertex whose degree should be decreased
has many available incident edges in $H_0$, while a vertex whose degree
should be increased has many available incident edges in $G\setminus
H_0$.

It remains to realise (something close to) these desired degree corrections by actually
removing and adding edges.  For this, we randomly orient the edges of $G$ and then use a random algorithm which (with high probability) changes the out-degree of each
vertex by the amount prescribed by $\psi$. The unavoidable side effects are
confined to in-degree changes at a carefully chosen small set of
vertices, and these changes are further restricted to a sparse set of possible
values.  In combination, this keeps the final degree distribution nearly uniform.  A
more detailed proof sketch is given in Section~\ref{sec:sketch}, and
the proof of Theorem~\ref{thm:AWasympt} is completed in
Section~\ref{sec:mainproof}.

\section{Preliminaries}

In this section, we will sketch our proof of Theorem~\ref{thm:AWasympt}, before giving an overview of our notation and then recalling the concentration bounds we will use. 

\subsection{Proof sketch for Theorem~\ref{thm:AWasympt}}\label{sec:sketch}
Let $G$ be a $d$-regular $n$-vertex graph. We wish to find a spanning subgraph $H\subset G$ with $(1+o(1))n/(d+1)$ vertices of degree $k$, for each $k\in [0,d]$. Our starting point is the irregular random subgraph $H_0\subset G$, where we include any edge $uv\in E(G)$ if $x_u+x_v\geq 1$, using the independent uniform random variables $x_v\in [0,1]$, $v\in V(G)$.
As noted by Fox, Luo and Pham~\cite{fox2024random}, it seems likely that $H=H_0$ does not satisfy \eqref{eq:balanceddegrees} with high probability if $d=\omega(n/\log n)$. To reach our desired subgraph $H$, then, we will modify (a typical instance of) $H_0$ by removing edges and adding other edges from $G\setminus H_0$. Note that we will have few options either when removing edges from around low-degree vertices in $H_0$, or when adding edges from $G\setminus H_0$ around high-degree vertices in $H_0$. By a delicate choice in our degree correction function, we will avoid doing so almost entirely. 

Our proof will have three steps (as mentioned at the end of Section~\ref{sec:intro}):

\begin{enumerate}
\item Take $H_0\subset G$, a typical irregular random subgraph with certain useful properties.
\item Find a desirable degree correction function $\psi:V(G)\to \mathbb{Z}$. 
\item Alter $H_0$ by adding/removing edges while controlling over- and under-corrections to degrees.
\end{enumerate}

\smallskip 

\noindent\textbf{Step 1: properties of $H_0$.} We will start with the following known properties of the irregular random subgraph. 
\begin{theorem}\label{thm:propsirregrandom}
Let $G$ be a $d$-regular graph on $n$ vertices, and let $H_0=H_0(G)$ be the irregular random subgraph. For each $0\leq k\leq d$, let $X_k$ be the number of vertices with degree $k$ in $H_0$. Then, for each $0\leq k\leq d$, $\mathbb{E} X_k=n/(d+1)$ and $\mathrm{Var}(X_k)\leq 17n/(d+1)$.
\end{theorem}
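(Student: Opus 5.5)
Both claims reduce to computing the law of a single degree, and then the joint law of two degrees. Fix $0\le k\le d$. For a vertex $v$, condition on $x_v=x$. Then each neighbour $u$ of $v$ has $x_u\ge 1-x$ independently with probability $x$, so $d_{H_0}(v)\sim\mathrm{Bin}(d,x)$. Integrating,
\[
\P(d_{H_0}(v)=k)=\int_0^1\binom dk x^k(1-x)^{d-k}\,dx=\binom dk B(k+1,d-k+1)=\frac1{d+1}.
\]
Summing over $v$ gives $\mathbb E X_k=n/(d+1)$.

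For the variance, write $X_k=\sum_v I_v$ with $I_v=\mathbf 1[d_{H_0}(v)=k]$. Then
\[
\mathrm{Var}(X_k)\le \frac{n}{d+1}+\sum_{u\ne v}\mathrm{Cov}(I_u,I_v).
\]
The indicator $I_v$ depends only on $x_w$ for $w\in\{v\}\cup N(v)$. So $I_u$ and $I_v$ are independent unless $\mathrm{dist}(u,v)\le 2$, and the covariance sum runs only over such pairs.

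\textbf{Adjacent pairs.} There are $nd$ ordered adjacent pairs, so I need $\P(I_u=I_v=1)\le C/(d+1)^2$. Condition on $x_u=a$ and $x_v=b$. For $u$ to have degree $k$, it needs some $j\in\{k-1,k\}$ of its other neighbours (depending on whether $a+b\ge 1$) to have value $\ge 1-a$, and similarly for $v$. Common neighbours couple the two counts. I would bound the joint probability crudely: $\P(I_u=1\mid x_u=a,\,x_v=b,\,\text{all }x_w\text{ for }w\in N(v))$ is at most the maximum point probability of a binomial over the neighbours of $u$ not in $N(v)\cup\{v\}$. This cannot work when the neighbourhoods overlap heavily, since then few private neighbours remain. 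So instead I would condition on the values at the common neighbours and use the max-probability bound $\sup_j\P(\mathrm{Bin}(m,p)=j)\lesssim 1/\sqrt{mp(1-p)}$ together with integration over $a$ and $b$.

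\textbf{Pairs at distance two.} There are up to $nd^2$ ordered pairs at distance exactly $2$. Here a bound of $O(1/(d+1)^2)$ on the joint probability is not enough: it would give $O(n)$ rather than $O(n/d)$. I need the covariance itself to be $O(1/d^3)$ on average. The key identity is that, conditioned on the values of the common neighbours $S=N(u)\cap N(v)$, the events $I_u$ and $I_v$ are independent. Hence
\[
\mathrm{Cov}(I_u,I_v)=\mathrm{Var}_S\text{-covariance of } f_u(x_S),\,f_v(x_S),
\]
where $f_u(x_S)=\P(I_u\mid x_S)$. Then I would expand $f_u$ in how it depends on each $x_w$, $w\in S$. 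Changing one $x_w$ changes the degree of $u$ by at most one, and that edge $uw$ is present only if $x_w\ge 1-x_u$. This should show that $f_u$ has "influence" $O(1/d)$ per coordinate relative to its mean $1/(d+1)$. Covariance bounds of Efron–Stein type then give $|\mathrm{Cov}|\lesssim \sum_{w\in S}\mathbb E[\partial_w f_u\,\partial_w f_v]\approx |S|/d^4$. Summing $|S|$ over pairs $(u,v)$ counts paths $u\,w\,v$, of which there are at most $nd^2$, giving $O(n/d^2)$, which suffices.

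I expect the main obstacle to be getting explicit constants (to reach $17$) and handling the pairs with large overlap $S$, where the degrees are strongly dependent. In both cases I would follow the Fox–Luo–Pham analysis and cite it for the constant, since the statement is attributed as known.
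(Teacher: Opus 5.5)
Your computation of $\mathbb E X_k$ is correct and differs from the paper's only cosmetically. The paper uses the Alon--Wei observation that $d_{H_0}(v)=k$ exactly when $1-x_v$ is the $(k+1)$st largest of the $d+1$ i.i.d.\ uniforms $1-x_v$ and $x_u$ ($u\in N_G(v)$), so exchangeability gives $1/(d+1)$ without any integral; you integrate the conditional law $\mathrm{Bin}(d,x_v)$ instead.

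For the variance, the paper gives no argument and simply cites Fox, Luo and Pham, so your fallback to that citation matches it. The self-contained sketch you put before it would not work as written, even for the order of magnitude. The conditional independence given $x_S$ and a coordinatewise covariance bound are fine. However, the key estimate $|\mathrm{Cov}(I_u,I_v)|\lesssim |S|/d^4$ for distance-two pairs, which gives $O(n/d^2)$ in total, is false. Take $G$ a disjoint union of copies of $K_{d,d}$, and $u,v$ on the same side of one copy. Then $S$ is the whole other side and $f_u(x_S)=f_v(x_S)=D$, the length of one spacing of $d$ independent uniform points. So $\mathrm{Cov}(I_u,I_v)=\mathrm{Var}(D)=\frac{d}{(d+1)^2(d+2)}\approx |S|/d^3$, and distance-two pairs contribute about $n/d$ in total, the same order as the diagonal term.

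The error is estimating $\mathbb E[\partial_w f_u\,\partial_w f_v]$ by a product of typical influences. In fact the influence of $x_w$ is spiky (here of order $1/d$ on an event of probability of order $1/d$), and the spikes for $u$ and $v$ coincide. What your route really needs is $\mathbb E\,\mathrm{Var}_w f_u=O(d^{-3})$ for every common neighbour $w$, uniformly over all overlap patterns, with no slack. Your Lipschitz remark only shows that $|\Delta_w f_u|$ is typically $O(1/d)$. Summed over the at most $nd^2$ paths $u\,w\,v$, that gives a bound of order $n$. To do better you must exploit the cancellation $\int_0^1\big(\P(\mathrm{Bin}(m,a)=j)-\P(\mathrm{Bin}(m,a)=j-1)\big)\,da=0$. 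That is the actual content of the variance bound, and the sketch does not supply it.

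The adjacent-pair step is also lossy, even when $N_G(u)\cap N_G(v)=\emptyset$. Replacing a point probability by $\sup_j\P(\mathrm{Bin}(m,a)=j)\lesssim (ma(1-a))^{-1/2}$ forgets that, for fixed $k$, this probability is only large for $a$ within $O(d^{-1/2})$ of $k/d$. After integrating over $a$ you get at best $\P(I_u=I_v=1)=O(d^{-3/2})$, hence a total of $O(n/\sqrt d)$ over the $nd$ adjacent pairs rather than $O(n/d)$. You need the exact identity $\int_0^1\P(\mathrm{Bin}(m,a)=j)\,da=1/(m+1)$ on both sides, the same Beta integral you used for the mean. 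The shared indicator $\mathbf 1[x_u+x_v\ge 1]$ and the common neighbours must be handled jointly.

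So your proposal proves the expectation and, like the paper, rests the variance bound entirely on Fox--Luo--Pham. The sketched independent route should not be regarded as a proof of it.
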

The value of the expectation in Theorem~\ref{thm:propsirregrandom} follows from a result of Frieze, Gould,
Karo\'nski and Pfender~\cite{frieze2002graph}, while its direct statement has the following elegant proof of Alon and Wei~\cite[Lemma~2.2]{alon2023irregular}. For each $v\in V(G)$ and $k\in [0,d]$, observe that (assuming, as holds with probability 1, that the numbers $x_u$, $u\in V(G)$, are all distinct) $v$ has degree $k$ in $H_0$ exactly if among the $d+1$ numbers $1-x_v$ and $x_u$, $u\in N_G(v)$, the number $1-x_v$ is the $(k+1)$st largest. As $1-x_v$ has the same distribution as each $x_u$, $u\in N_G(v)$, and they are all independent, we thus have $\P(d_{H_0}(v)=k)=1/(d+1)$, and the value of the expectation in Theorem~\ref{thm:propsirregrandom}  follows by linearity. The bound on the variance in Theorem~\ref{thm:propsirregrandom} was calculated by Fox, Luo and Pham~\cite[Theorem~2.1]{fox2024random}.

From Theorem~\ref{thm:propsirregrandom} and Chebyshev's inequality (Lemma~\ref{lem:chebyshev}),
the theorem already follows in the small-degree range; in particular,
we may assume throughout the main part of the proof that
$d=\omega(\log^{25} n)$.  In fact, the same argument gives the result
for \(d=o(n^{1/3})\) (see Section~\ref{sec:finalmainproof}).
For larger values of $d$ (in particular $d=\omega(n/\log n)$), we cannot expect to bound well the number of vertices with degree $i$ for each $0\leq i\leq d$. As vertices in $H_0$ with high- or low-degree play a particularly delicate role when creating our degree correction function, we focus on bounding their number well. We will need that there are likely plenty of vertices with degree at most $i$ and plenty with degree at least $d-i$, for each $0\leq i\leq d$ (later given as \ref{prop:smallorbigpots}). For other potential degrees, we (essentially) need there roughly to be the right number of vertices in $H_0$ with degree in intervals $[i,i+\ell]$, for each $0\leq i\leq d-\ell$ and $\ell\geq d^{2/3}/3$ (later given as \ref{prop:intervalsofpots}). These two properties are shown to hold with probability at least $1/2$ in $H_0$ in Section~\ref{sec:propsofrandomirreg}.

\smallskip

\noindent\textbf{Step 2: the degree correction function $\psi$.} For any given vertex $v\in V(G)$, conditioned on the value of $x_v$, we expect $v$ to have degree around $x_vd$ in $H_0$. Via a Chernoff bound (Lemma~\ref{lem:chernoff}) and a union bound, it follows that, with high probability, $|d_{H_0}(v)-x_vd|\leq d^{2/3}$ for each $v\in V(G)$. Similarly using a Chernoff bound and a union bound, with high probability there will be $(1 \pm \varepsilon)\ell \frac{n}{d+1}$ vertices $v\in V(G)$ with $\frac{i}{d+1}\leq x_v\leq \frac{i+\ell}{d+1}$ for any fixed $\varepsilon>0$ and all $0 \leq i \leq d-\ell$ and $\ell \geq d^{2/3}/3$. In combination, these properties make it straightforward to show that there is some function $\psi:V(G)\to \mathbb{Z}$ with $\|\psi\|=O(d^{2/3})$ such that $\#\{v\in V(G):d_{H_0}(v)-\psi(v)=k\}=(1+o(1))\frac{n}{d+1}$ for each $k\in [0,d]$.

Our challenge is that we want such a function with $\psi(v)<0$ when $d_{H_0}(v)$ is small and $\psi(v)>0$ when $d_{H_0}(v)$ is large, so that we are not too limited when making the degree corrections. Obtaining this property, and doing so while using relatively simple properties of the irregular random subgraph, motivates our construction of $\psi$. We sketch this construction at the start of Section~\ref{sec:degreecorrection}, before then carrying it out for Lemma~\ref{lem:randomirregularandcorrections}. The precise properties we obtain for $(H_0,\psi)$ are given later as \ref{prop:corr:smallcorrs}--\ref{prop:corr:options2} in that lemma. 

\smallskip

\noindent\textbf{Step 3: correcting degrees while managing unwanted changes.} 
Suppose now we have the subgraph $H_0$ with our desired function $\psi$. For each $i\in [0,d]$, we then let $W_i$ be the set of vertices $v\in V(G)$ for which $d_{H_0}(v)-\psi(v)=i$, i.e., the `pot' of vertices we aim to have degree $i$ in $H$. We will make our desired changes by (separately) removing $\psi(v)$ edges from $H_0$ around each vertex $v$ if $\psi(v)>0$, and adding $-\psi(v)$ edges from $G\setminus H_0$ around each vertex $v$ if $\psi(v)<0$. This causes some `unwanted changes' to the degrees of vertices on the non-$v$ side of each of these removed/added edges. To track the `unwanted changes', we will take an orientation of $G$ uniformly at random from all such orientations, to get the oriented graph $\vec{G}$. Letting $\vec{H}_0$ and $\vec{H}$ be the induced orientation of $H_0$ and $H$, respectively, to get $\vec{H}$ from $\vec{H}_0$, we will remove edges from $\vec{H}_0$ and add edges from $\vec{G}\setminus \vec{H}_0$ so that the change in out-degree at each vertex $v$ is equal to $-\psi(v)$. Therefore, the `unwanted changes' correspond to the change in in-degree at each vertex.

We will control these `unwanted changes' by choosing a small random subset $Z\subset V(G)$ and obeying the following two rules.
\begin{enumerate}[label = \roman{enumi}),itemsep=0pt, topsep=2pt]
\item Only edges directed into $Z$ are removed from $\vec{H}_0$ or added from $\vec{G}\setminus \vec{H}_0$.\label{rule:1}
\item The overall change in in-degree at each vertex is in $I-I$, where $I=\{0,1,2,$ $4,\dots,2^t\}$ with $t=\lfloor\log_2d\rfloor$.\label{rule:2}
\end{enumerate}
If $d=o(n/\log^6 n)$, then picking $Z\subset V(G)$ by including each vertex independently at random with probability $\eps/t^5$ (for any fixed $\eps>0$) will, with high probability, imply that if the changes can be made while obeying \ref{rule:1} and \ref{rule:2} then we will (essentially) get the subgraph we want. Indeed, rule \ref{rule:1} will ensure that (with high probability) we make our desired correction exactly for enough vertices that the lower bound in \eqref{eq:balanceddegrees} holds with $2\eps$ replacing $\eps$; i.e., for each $i\in [0,d]$, all of the vertices in $W_i\setminus Z$ will end up with degree $i$ in $H$ as they have in-degree 0 in the removed/added edges, and $W_i\cap Z$ is likely to be a small set.
Rule~\ref{rule:2} will ensure that (with high probability) not too many vertices will end up with any one fixed degree $i$ due to the unwanted changes. Indeed, note that any vertex with degree $i$ in the final graph $H$ which is not in $W_i$ must be in some $W_j\cap Z$ with $j-i\in I-I$. As $|I-I|\leq (t+2)^2$, this is likely to be few (at most $\eps n/(d+1)$) vertices.
For larger $d$, we will need only a slight alteration to the choice of the random set $Z$ (as described at the start of Section~\ref{sec:finalmainproof}).

In order to make the changes we want while obeying rule \ref{rule:2}, we first use an iterative random process to choose a set $E$ of edges of $\vec{H}_0$ which obey rule \ref{rule:1} and are such that each vertex $v$ with $\psi(v)>0$ has out-degree $\psi(v)$ in these edges and each vertex in $Z$ has in-degree in $I$ in these edges. At each step, we take a vertex $z\in Z$ that has not yet been considered, take the set of edges in $\vec{H}_0$ directed to $z$ from vertices which need more out-edges in $E$, and, if this set of edges is non-empty, then select from it $2^s\in I$ edges to add to $E$ uniformly at random for as large an integer value of $s$ as possible. Repeating this process for each vertex in $Z$ but with the edges in $\vec{G}\setminus \vec{H}_0$ and the vertices $v$ with $\psi(v)<0$, we can choose a set of edges $E'\subset \vec{G}\setminus \vec{H}_0$ such that, if the edges in $E$ are removed from $H_0$ and the edges in $E'$ are added, then \ref{rule:2} holds. 
In the first random process, the choice of $s$ ensures that each vertex still in want of out-edges in $E$ will get such an edge with probability at least $1/2$. Therefore, as each vertex $v$ with $\psi(v)>0$ will have plenty of out-neighbours in $Z$ in $\vec{H}_0$, our process is likely to assign it enough out-neighbours in $E$ to make the desired correction. Similarly, the second process is likely to choose the right number of out-edges of vertices $v$ with $\psi(v)<0$ for $E'$. These random processes are carried out in Section~\ref{sec:finalmainproof}, which completes our proof of Theorem~\ref{thm:AWasympt}.

\subsection{Notation} 
A graph \(G\) has vertex set \(V(G)\) and edge set \(E(G)\). For each \(v\in V(G)\), \(N_G(v)\) is the neighbourhood of \(v\) in
\(G\), and the degree of $v$ is $d_G(v)=|N_G(v)|$. Where \(U\subset V(G)\), we set $N_G(v,U)=N_G(v)\cap U$ and $d_G(v,U)=|N_G(v,U)|$.
For any subgraph \(H\subset G\), \(G\setminus H\) is the spanning
subgraph of \(G\) with vertex set \(V(G)\) and edge set
\(E(G)\setminus E(H)\). For graphs \(H_1\) and
\(H_2\), \(H_1+H_2\)
is the graph with vertex set $V(H_1)\cup V(H_2)$ and edge set \(E(H_1)\cup E(H_2)\).
If \(\vec G\) is an orientation of \(G\) and $v\in V(\vec{G})$, then \(N_{\vec G}^+(v)\) and
\(N_{\vec G}^-(v)\) denote the out- and in-neighbourhood of
\(v\), respectively, and $d_{\vec G}^+(v)=|N_{\vec G}^+(v)|$ and $d_{\vec G}^-(v)=|N_{\vec G}^-(v)|$.
Notation like \(N_{\vec G}^+(v,U)\) and $\vec{G}\setminus \vec{H}$ is used analogously to our notation for undirected graphs. If \(H\subset G\), then \(\vec H\) denotes the orientation
of \(H\) induced by \(\vec G\).

For a finite set \(S\), we sometimes use \(\#S=|S|\) to denote its cardinality. If \(a,b\in \mathbb{Z}\) with $b\geq a$, then
$[a,b]=\{x\in \mathbb{Z}:a\leq x\leq b\}$. For any $d\in \mathbb{N}$ with $d\geq 2$, $[0,d]=\{0,1,\dots,d\}$ and $[d]=[1,d]$, while we use $[0,1]$ as usual for the interval of real numbers.
For sets \(A,B\subset \mathbb{Z}\), we write $A+B=\{a+b:a\in A,\ b\in B\}$ and $A-B=\{a-b:a\in A,\ b\in B\}$, interpreting multiple sums and differences such as $I+I-I-I$ in the same way.
For real numbers \(a,b\) and \(c\ge 0\), $a=b\pm c$ means that \(b-c\le a\le b+c\) while $a=(1\pm c)b$ means that \((1-c)b\le a\le (1+c)b\). If
\(\psi:V(G)\to \mathbb{R}\), then $\|\psi\|=\max_{v\in V(G)}|\psi(v)|$. All logarithms are natural unless a base is explicitly
specified, as in \(\log_2\).

All asymptotic notation is with respect to \(n\to\infty\). We say that
an event holds with high probability if its probability tends to \(1\) as
\(n\to\infty\). We use standard hierarchy notation for constants. For example, an
assertion made under the assumption $\lambda \ll \varepsilon$ means that, for every fixed \(\varepsilon>0\), there is a
\(\lambda_0=\lambda_0(\varepsilon)>0\) such that the assertion holds for
all \(0<\lambda\le \lambda_0\), provided \(n\) is sufficiently large as a
function of \(\lambda\) and \(\varepsilon\). Longer hierarchies are
interpreted similarly, from right to left. Where rounding choices do not affect the estimates, we omit rounding symbols.


\subsection{Concentration inequalities} 
We will use the following standard form of Chebyshev's inequality (see, for example, \cite[Theorem~4.1.1]{alon2016probabilistic}).
\begin{lemma}[Chebyshev's inequality]\label{lem:chebyshev}
For any random variable $X$ with finite, non-zero variance, and any $k>0$,
$$\P\Big(|X-\mathbb EX|\ge k \sqrt{\mathrm{Var}(X)}\Big)\le \frac{1}{k^2}.$$
\end{lemma}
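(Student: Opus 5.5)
This is the standard Chebyshev inequality, and I would prove it by applying Markov's inequality to the nonnegative random variable $Y=(X-\mathbb{E}X)^2$. Since $\mathrm{Var}(X)$ is finite, $\mathbb{E}X$ exists and is finite, so $Y$ is a well-defined nonnegative random variable with $\mathbb{E}Y=\mathrm{Var}(X)$. Write $\sigma=\sqrt{\mathrm{Var}(X)}>0$. The event $|X-\mathbb{E}X|\ge k\sigma$ coincides with the event $Y\ge k^2\sigma^2$, because both sides of the first inequality are nonnegative.

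The main step is Markov's inequality. For a nonnegative random variable $Y$ and any $a>0$ we have the pointwise bound
\[
a\cdot\mathbf{1}_{\{Y\ge a\}}\le Y .
\]
Taking expectations gives $\P(Y\ge a)\le \mathbb{E}Y/a$. I would apply this with $a=k^2\sigma^2$, which is positive because $k>0$ and the variance is non-zero. This gives
\[
\P\big(|X-\mathbb{E}X|\ge k\sigma\big)=\P\big(Y\ge k^2\sigma^2\big)\le \frac{\mathrm{Var}(X)}{k^2\,\mathrm{Var}(X)}=\frac{1}{k^2}.
\]

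I do not expect any genuine obstacle. The only points needing care are that $\mathbb{E}X$ is finite, which follows from the finite variance, and that the threshold $a$ is strictly positive, which is where the hypotheses $k>0$ and $\mathrm{Var}(X)\neq 0$ are used.
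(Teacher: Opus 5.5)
Your proof is correct: applying Markov's inequality to $(X-\mathbb{E}X)^2$ with the positive threshold $k^2\mathrm{Var}(X)$ is the standard argument. The paper does not prove this lemma itself but cites it from Alon--Spencer, where it is proved in essentially this way.
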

We will also use the following (see, for example, \cite[Corollary~2.2 and Theorem~2.10]{janson2011random}).
\begin{lemma}[Chernoff's bound]\label{lem:chernoff}
Let $X$ be a random variable which is binomially distributed with mean $\mu$.
Then, for any $0<\gamma \leq 1$, \[\mathbb{P}(|X-\mu|\geq \gamma \mu)\leq 2\exp\left({-\frac{\gamma^2\mu}{3}}\right).\]
\end{lemma}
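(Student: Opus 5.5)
The statement immediately above is the Chernoff bound, Lemma~\ref{lem:chernoff}, which is standard. I would prove it by the exponential moment method, treating the upper and lower tails separately and combining them with a union bound to obtain the factor $2$.

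Write $X=\sum_{i=1}^m Y_i$ with the $Y_i$ independent Bernoulli$(p)$ and $\mu=mp$. For $\theta>0$, Markov's inequality applied to $e^{\theta X}$ gives
\[
\P(X\ge (1+\gamma)\mu)\le e^{-\theta(1+\gamma)\mu}\,\mathbb{E}e^{\theta X}=e^{-\theta(1+\gamma)\mu}\bigl(1+p(e^\theta-1)\bigr)^m\le \exp\bigl(\mu(e^\theta-1)-\theta(1+\gamma)\mu\bigr),
\]
where the last step uses $1+x\le e^x$. Taking $\theta=\log(1+\gamma)$ gives the bound $\exp(-\mu\varphi(\gamma))$, with $\varphi(\gamma)=(1+\gamma)\log(1+\gamma)-\gamma$.

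For the lower tail, apply the same argument to $e^{-\theta X}$ with $\theta=-\log(1-\gamma)$. This gives $\exp(-\mu\varphi(-\gamma))$ for $\gamma<1$. The case $\gamma=1$ follows directly, since $\P(X=0)=(1-p)^m\le e^{-\mu}$.

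The only real work is the elementary inequalities $\varphi(\gamma)\ge \gamma^2/3$ and $\varphi(-\gamma)\ge\gamma^2/2\ge\gamma^2/3$ for $0<\gamma\le 1$.

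\begin{itemize}
\item For the second, set $f(\gamma)=\varphi(-\gamma)-\gamma^2/2$. Then $f(0)=0$ and $f'(\gamma)=-\log(1-\gamma)-\gamma\ge0$.
\item For the first, set $g(\gamma)=\varphi(\gamma)-\gamma^2/3$. Then $g(0)=0$ and $g'(\gamma)=\log(1+\gamma)-2\gamma/3$. This is concave, vanishes at $0$, and at $\gamma=1$ equals $\log 2-2/3>0$, so by concavity it is nonnegative on $[0,1]$.
\end{itemize}

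Summing the two tail bounds gives $2\exp(-\gamma^2\mu/3)$. I expect the main (mild) obstacle to be the verification of $\varphi(\gamma)\ge\gamma^2/3$, which is where the range $\gamma\le1$ is needed. Alternatively, one may simply cite \cite[Corollary~2.2 and Theorem~2.10]{janson2011random}, as the paper does.
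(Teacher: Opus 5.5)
Your proof is correct. The exponential-moment bounds $\exp(-\mu\varphi(\pm\gamma))$, the separate treatment of $\gamma=1$ in the lower tail, and the two calculus checks all go through; in particular, $g'(\gamma)=\log(1+\gamma)-2\gamma/3$ is concave, vanishes at $0$ and is positive at $1$.

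The paper gives no proof of its own and simply cites Janson--{\L}uczak--Ruci\'nski. Their derivation is this same exponential-moment argument, finished via $\varphi(x)\ge x^2/(2(1+x/3))$ rather than your direct check on $[0,1]$, so your route is essentially the standard one.
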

We will also use the following concentration result for Lipschitz functions (see, for example, Chapter~7 of \cite{alon2016probabilistic}).
\begin{lemma}\label{lem:mcd}
Suppose that $X:\prod_{i=1}^N\Omega_i\to \mathbb{R}$ is $k$-Lipschitz. Then, for each $t>0$,
\[
\mathbb{P}(|X-\mathbb{E} X|>t)\leq 2\exp\left(\frac{-t^2}{2k^2N}\right).
\]
\end{lemma}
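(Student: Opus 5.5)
We prove this through the Doob martingale and the Azuma--Hoeffding inequality; this is the standard route, and the constant $2k^2N$ is exactly what the cruder form of Azuma's inequality gives. We assume, as is standard, that the coordinates $\omega_1,\dots,\omega_N$ are independent, with $\omega_i\in\Omega_i$ distributed according to some probability measure. We also assume that $X$ being $k$-Lipschitz means that changing a single coordinate changes $X$ by at most $k$.

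First, define the exposure martingale
\[
X_i=\mathbb{E}[X\mid \omega_1,\dots,\omega_i],\qquad 0\le i\le N,
\]
so that $X_0=\mathbb{E}X$ and $X_N=X$. Set $D_i=X_i-X_{i-1}$. We claim $|D_i|\le k$. Fix $\omega_1,\dots,\omega_{i-1}$. By independence, $X_{i-1}$ is the average of $X_i(\omega_1,\dots,\omega_{i-1},\omega_i')$ over an independent copy $\omega_i'$ of $\omega_i$. Moreover, $X_i(\dots,\omega_i)$ and $X_i(\dots,\omega_i')$ are both averages over the same independent law of $(\omega_{i+1},\dots,\omega_N)$, of functions that differ pointwise by at most $k$. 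Hence they differ by at most $k$, and averaging over $\omega_i'$ gives $|D_i|\le k$. In addition, $\mathbb{E}[D_i\mid\omega_1,\dots,\omega_{i-1}]=0$.

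Next, we use Hoeffding's lemma: if $\mathbb{E}D=0$ and $|D|\le c$, then $\mathbb{E}e^{\lambda D}\le e^{\lambda^2c^2/2}$. To see this, convexity of $x\mapsto e^{\lambda x}$ on $[-c,c]$ gives
\[
e^{\lambda x}\le \frac{c-x}{2c}e^{-\lambda c}+\frac{c+x}{2c}e^{\lambda c}.
\]
Taking expectations yields $\mathbb{E}e^{\lambda D}\le\cosh(\lambda c)\le e^{\lambda^2c^2/2}$, where the last step compares Taylor series term by term. Applying this conditionally on $\omega_1,\dots,\omega_{N-1}$, and then iterating with the tower property, gives
\[
\mathbb{E}e^{\lambda(X-\mathbb{E}X)}\le e^{\lambda^2k^2N/2}.
\]

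Finally, apply Markov's inequality to $e^{\lambda(X-\mathbb{E}X)}$. This gives
\[
\P(X-\mathbb{E}X>t)\le e^{-\lambda t+\lambda^2k^2N/2},
\]
and the choice $\lambda=t/(k^2N)$ yields $e^{-t^2/(2k^2N)}$. The same argument applied to $-X$ bounds the lower tail, and a union bound gives the factor $2$.

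The only step that needs care is the bound $|D_i|\le k$. It is here that independence of the coordinates is used, so that conditioning on $\omega_i$ does not change the law of the later coordinates. The rest of the argument is routine calculation.
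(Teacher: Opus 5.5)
Your proof is correct: the coordinate-exposure martingale has increments bounded by $k$, the conditional estimate $\mathbb{E}[e^{\lambda D_i}\mid \omega_1,\dots,\omega_{i-1}]\le\cosh(\lambda k)\le e^{\lambda^2k^2/2}$ follows, and Markov's inequality with $\lambda=t/(k^2N)$ gives exactly the stated bound. The paper does not prove this lemma but cites Chapter~7 of Alon--Spencer, where it is obtained the same way (Azuma's inequality via the $\cosh$ estimate, applied to the Doob martingale of coordinate exposure), so your argument is essentially the intended one.
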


We will also need  a version of Azuma's inequality for submartingales. The following version is the neatest statement for our application. It is equivalent to \cite[Lemma~2.5]{kapralov2018optimal} and a similar statement can also be deduced easily from any standard  version of Azuma's inequality for submartingales (for example from \cite[Lemma~4.2]{wormald1999differential}).
\begin{lemma}[\cite{kapralov2018optimal}]\label{lem:azuma}
Let $p$ satisfy $0<p<1$. Let $(X_i)_{i =1}^n$ be Bernoulli random variables satisfying $\mathbb E(X_i|X_1, \dots, X_{i-1})\ge p$ for each $i\in [n]$. Then, for any $t>0$,
\[
\mathbb{P}\bigg(\sum_{i=1}^nX_i\leq pn- t \bigg) \leq  \exp \left( -\frac{t^2}{2(1-p)n+2t} \right).
\]
\end{lemma}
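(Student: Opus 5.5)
The plan is to reduce to the case of independent Bernoulli$(p)$ variables by a coupling, and then to apply a standard Bernstein-type tail bound to the resulting binomial variable.

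\textbf{Step 1: coupling.} Let $U_1,\dots,U_n$ be independent uniform random variables on $[0,1]$. We build the sequence recursively. Having constructed $X_1,\dots,X_{i-1}$, let
\[
q_i=\mathbb{P}(X_i=1\mid X_1,\dots,X_{i-1}),
\]
which is $\ge p$ by hypothesis, and set $X_i=\mathbf{1}\{U_i\le q_i\}$. Since $U_i$ is independent of $U_1,\dots,U_{i-1}$, and hence of $X_1,\dots,X_{i-1}$, this reproduces the joint law of $(X_1,\dots,X_n)$. Now set $B_i=\mathbf{1}\{U_i\le p\}$. Then the $B_i$ are independent Bernoulli$(p)$ variables, and $B_i\le X_i$ pointwise because $q_i\ge p$. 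Hence $S=\sum_i X_i\ge Y:=\sum_i B_i\sim \mathrm{Bin}(n,p)$ pointwise, and so
\[
\mathbb{P}(S\le pn-t)\le \mathbb{P}(Y\le pn-t).
\]

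\textbf{Step 2: binomial tail.} Pass to the complement $Y'=n-Y\sim\mathrm{Bin}(n,1-p)$, so the event becomes $Y'\ge (1-p)n+t$. The summands $1-B_i-(1-p)$ are independent, centred and bounded by $1$, with total variance $np(1-p)$. Bernstein's inequality then gives
\[
\mathbb{P}\bigl(Y'\ge (1-p)n+t\bigr)\le \exp\!\left(-\frac{t^2}{2np(1-p)+2t/3}\right).
\]
Since $np(1-p)\le (1-p)n$ and $2t/3\le 2t$, this is at most $\exp\bigl(-t^2/(2(1-p)n+2t)\bigr)$, as claimed.

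If one prefers not to quote Bernstein, the same bound follows from the usual exponential-moment argument. For $\lambda>0$ we have $\mathbb{E}e^{\lambda(1-B_i-(1-p))}\le \exp\bigl((1-p)(e^\lambda-1-\lambda)\bigr)$. Bounding $e^\lambda-1-\lambda\le \lambda^2/(2(1-\lambda/3))$ and choosing $\lambda=t/((1-p)n+t/3)$ then gives the result.

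\textbf{Main obstacle.} The only real subtlety is Step 1: one must check that the recursive construction has the correct joint distribution, and that domination holds pointwise rather than merely in expectation. This is what lets the dependent sum inherit the binomial tail. Alternatively, one can avoid the coupling altogether with a supermartingale argument. From $q_i\ge p$ one gets
\[
\mathbb{E}\bigl[e^{-\lambda X_i}\mid X_1,\dots,X_{i-1}\bigr]\le 1-p(1-e^{-\lambda})=\mathbb{E}e^{-\lambda B_i},
\]
so $\mathbb{E}e^{-\lambda S}\le(\mathbb{E}e^{-\lambda B_1})^n$. The binomial Chernoff computation of Step 2 then goes through unchanged.
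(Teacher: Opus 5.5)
Your proof is correct, but it does not follow the paper. The paper gives no proof of this lemma: it quotes it as Lemma~2.5 of \cite{kapralov2018optimal}, and remarks that a similar bound follows from any standard Azuma-type inequality for submartingales (e.g.\ \cite[Lemma~4.2]{wormald1999differential}), i.e.\ from a bounded-difference martingale argument applied to $\sum_i (X_i-p)$. You instead exploit the $\{0,1\}$-valued structure. Your sequential uniform coupling shows that $\sum_i X_i$ stochastically dominates $\mathrm{Bin}(n,p)$. Bernstein's inequality then gives a bound slightly stronger than the one stated, and so does your exponential-moment computation, which checks out: $\lambda=t/((1-p)n+t/3)$ gives exactly $\exp(-t^2/(2(1-p)n+2t/3))$. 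The martingale route is more general, since it needs only bounded increments and a lower bound on the conditional drift. Yours is self-contained and gives a structural conclusion worth noting for this paper. In the only place the lemma is used (Claim~\ref{clm:propsofHrGr}, with $p=1/2$ and a deterministic number $|I_v|$ of variables), domination by $\mathrm{Bin}(|I_v|,1/2)$ lets one apply the paper's own Chernoff bound, Lemma~\ref{lem:chernoff}, directly, so Lemma~\ref{lem:azuma} could be dispensed with. Your supermartingale alternative, based on $\mathbb{E}[e^{-\lambda X_i}\mid X_1,\dots,X_{i-1}]\le \mathbb{E}e^{-\lambda B_i}$, is essentially the standard proof behind such submartingale inequalities, and is the variant closest to the cited source.
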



\section{Proof of Theorem~\ref{thm:AWasympt}}\label{sec:mainproof}

In Section~\ref{sec:propsofrandomirreg} we prove some properties of the irregular random subgraph (\ref{prop:smallorbigpots} and \ref{prop:intervalsofpots} in Lemma~\ref{lem:propsofrandomirregular}) that hold with probability at least $1/2$, and then use them to find a degree correction function in Section~\ref{sec:degreecorrection}, proving Lemma~\ref{lem:randomirregularandcorrections}. Finally, we prove Theorem~\ref{thm:AWasympt} in Section~\ref{sec:finalmainproof}.


\subsection{Properties of the irregular random subgraph}\label{sec:propsofrandomirreg}
We now prove the likely properties we need from the irregular random subgraph (\ref{prop:smallorbigpots} and  \ref{prop:intervalsofpots} below) in order to construct our degree correction function.

\begin{lemma}\label{lem:propsofrandomirregular}
Let $1/n\ll \lambda\ll \eps\leq 1$ and $d\leq \lambda n$ with $d=\omega(\log^{5} n)$. Let $G$ be a $d$-regular $n$-vertex graph. For each $v\in V(G)$, choose $x_v\in [0,1]$ independently and uniformly at random. Let $H_0$ be the graph with vertex set $V(G)$ and edge set $\{uv\in E(G):x_u+x_v\geq 1\}$. For each $i\in [0,d]$, let $V_i=\{v\in V(G):d_{H_0}(v)=i\}$. Then, with probability at least $1/2$, we have the following properties.
\begin{enumerate}[label = \emph{\textbf{A\arabic{enumi}}}]
\item For each $0\leq i\leq d$, $\sum_{j=0}^i|V_j|\geq (1-\frac{\eps}{4})\frac{(i+1)n}{d+1}$ and $\sum_{j=0}^i|V_{d-j}|\geq (1- \frac{\eps}{4})\frac{(i+1)n}{d+1}$.\labelinthm{prop:smallorbigpots}
\item\labelinthm{prop:intervalsofpots} For each $0\leq i\leq d$ and $\ell \ge d^{2/3}/3$ with $i+\ell\leq d$, we have $\sum_{j=i}^{i+\ell}|V_j|=(1\pm \frac{\eps}{4})\frac{(\ell+1)n}{d+1}$.
\end{enumerate}
\end{lemma}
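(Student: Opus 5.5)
We reduce both properties to statements about the independent uniform variables $x_v$, using the fact that $d_{H_0}(v)$ is concentrated around $x_vd$. The plan has three steps.

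\textbf{Step 1 (degrees track $x_v$).} Conditioned on $x_v$, the degree $d_{H_0}(v)$ is a sum of $d$ independent indicators $\mathbf 1\{x_u\ge 1-x_v\}$, $u\in N_G(v)$, each with probability $x_v$. So $d_{H_0}(v)\sim\mathrm{Bin}(d,x_v)$. Lemma~\ref{lem:chernoff}, or an additive Chernoff/Hoeffding form when $x_vd$ is small, gives $|d_{H_0}(v)-x_vd|\le d^{2/3}/100$ with probability $1-\exp(-\Omega(d^{1/3}))$. Since $d=\omega(\log^5 n)$, we have $d^{1/3}=\omega(\log n)$, and a union bound over $v$ shows this holds for all $v$ with high probability.

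\textbf{Step 2 (counts of $x_v$ in intervals).} For real $0\le a<b\le 1$, the number $Y_{a,b}=\#\{v:x_v\in[a,b]\}$ is $\mathrm{Bin}(n,b-a)$. We only need intervals with endpoints in the grid $\frac{1}{d+1}\mathbb{Z}$ shifted by $\pm d^{2/3}/(100d)$, which is polynomially many intervals. Each relevant interval has length at least about $d^{2/3}/(4d)$, so its mean is at least $n d^{-1/3}/4\ge \lambda^{-1/3}d^{2/3}/4$. This is $\omega(\log n)$, and moreover $\gamma^2\mu=\omega(\log n)$ for $\gamma=\eps/100$. Lemma~\ref{lem:chernoff} and a union bound then give $Y_{a,b}=(1\pm\eps/100)(b-a)n$ for all of them with high probability.

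\textbf{Step 3 (deducing A1 and A2).} For \ref{prop:intervalsofpots}, Step~1 gives
\[\{v:x_v d\in[i+\delta,i+\ell-\delta]\}\subseteq \textstyle\bigcup_{j=i}^{i+\ell}V_j\subseteq\{v:x_vd\in[i-\delta,i+\ell+\delta]\},\]
where $\delta=d^{2/3}/100\le 3\ell/100$. By Step~2 both outer sets have size $(1\pm\eps/8)(\ell+1)n/(d+1)$, since the relative error $2\delta/\ell$ is at most $\eps/16$ provided $\eps$ is not tiny. If $\eps$ is small, we shrink $\delta$ to $\eps d^{2/3}$; the Chernoff exponent $\eps^2d^{1/3}$ is still $\omega(\log n)$. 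The discrepancy between $d$ and $d+1$ is negligible.

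For \ref{prop:smallorbigpots} with $i+1\ge d^{2/3}/3$, it follows from \ref{prop:intervalsofpots} applied to $[0,i]$ and $[d-i,d]$. For small $i$ the interval-counting approach loses too much, so we argue directly. By symmetry ($x\mapsto 1-x$ complements $H_0$ in $G$, since ties have probability zero) it suffices to treat low degrees. Let $S_i=\sum_{j\le i}|V_j|$. Then $\mathbb E S_i=(i+1)n/(d+1)$ by Theorem~\ref{thm:propsirregrandom}. For the variance, $\mathrm{Var}(S_i)\le\big(\sum_{j\le i}\sqrt{\mathrm{Var} X_j}\big)^2\le 17(i+1)^2n/(d+1)$. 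Chebyshev (Lemma~\ref{lem:chebyshev}) then gives a deviation of $\eps(i+1)n/(4(d+1))$ with failure probability $O\big(d/(\eps^2 n)\big)=O(\lambda/\eps^2)$, which is too large to union bound over $i$.

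\textbf{Main obstacle.} The obstacle is handling all small $i$ simultaneously. Applying the Chebyshev bound only at $i_m=2^m$ fails, because lower bounds for $S_{i_m}$ only imply a lower bound for $S_i$ with a factor-$2$ loss. Instead we use a finer grid $i\in\{\lceil(1+\eps/10)^m\rceil\}$, whose size is $O(\eps^{-1}\log d)$. Even then the union bound over this grid needs a failure probability smaller than $\lambda$ per point. To fix this, we replace Chebyshev by a sharper tail bound using the better variance estimate $\mathrm{Var}(S_i)\le 17(i+1)n/(d+1)$. We would try to get this by redoing the Fox--Luo--Pham covariance computation for threshold events, where the same proof should work. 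With that estimate, the failure probability at grid point $i$ is $O\big(d/(\eps^2(i+1)n)\big)$. Summing geometrically over the grid gives $O(\lambda/\eps^3)<1/4$. This yields \ref{prop:smallorbigpots}, and combined with the high-probability events of Steps~1 and~2 the total failure probability is at most $1/2$.
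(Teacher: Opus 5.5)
There is a genuine gap, in \ref{prop:smallorbigpots} for $i<d^{2/3}/3$. Your treatment of \ref{prop:intervalsofpots}, and of \ref{prop:smallorbigpots} for $i\geq d^{2/3}/3$, is essentially the paper's sandwich argument and is fine. There is one constant slip: with $\delta=\eps d^{2/3}$ the relative error $2\delta/\ell$ can reach $6\eps$, so you need $\delta$ of order $\eps d^{2/3}/100$, or a window $d^{3/5}=o(d^{2/3})$ as the paper uses.

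The gap is this. For small $i$ everything rests on $\mathrm{Var}(S_i)=O((i+1)n/(d+1))$, which you do not prove; you only say that redoing the Fox--Luo--Pham computation ``should work''. That computation bounds $\mathrm{Var}(X_k)$ for a single exact degree $k$. Your bound also needs all the cross terms $\mathrm{Cov}(X_k,X_{k'})$ with $k,k'\leq i$. Equivalently, you must show that the threshold events $\{d_{H_0}(u)\leq i\}$ and $\{d_{H_0}(v)\leq i\}$, for $u,v$ at distance at most $2$, have covariance far below the trivial $(i+1)^2/(d+1)^2$. This may well be true, but it is a new second-moment calculation, not a consequence of Theorem~\ref{thm:propsirregrandom}. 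With the variance bound you actually have, Chebyshev fails with probability up to order $\lambda/\eps^2$ at each of $\Theta(\eps^{-1}\log d)$ grid points. So the union bound does not close when $d$ is close to $\lambda n$.

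The missing idea is that \ref{prop:smallorbigpots} is a one-sided bound, so for $i\geq C$ (with $\lambda\ll 1/C\ll\eps$) no second moment is needed. The paper lower-bounds $S_i$ by the number of $i$-good vertices. These are the $v$ with $x_v\leq\theta_i:=(1-\frac{\eps}{16})\frac{i}{d+1}$ that have at most $i$ neighbours $u$ with $x_u\geq 1-\theta_i$. Every such $v$ deterministically has $d_{H_0}(v)\leq i$, since its $H_0$-neighbours all satisfy $x_u\geq 1-x_v\geq 1-\theta_i$.

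\begin{itemize}
\item The number of $v$ with $x_v\leq\theta_i$ is binomial with mean about $in/(d+1)$. Chernoff gives the needed lower bound except with probability $e^{-\Omega(\eps^2 i)}$.
\item For each fixed $v$, the probability that $x_v\leq\theta_i$ and yet $v$ has at least $i$ neighbours with $x_u\geq 1-\theta_i$ is at most $\frac{i}{d+1}\cdot 2e^{-\eps^2 i/10^4}$, by Chernoff for $\mathrm{Bin}(d,\theta_i)$.
\item Markov's inequality then shows that at most $\frac{\eps in}{16(d+1)}$ such vertices are bad, except with probability $O(\eps^{-1}e^{-\eps^2 i/10^4})$.
\end{itemize}

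These failure probabilities decay exponentially in $i$, with no dependence on $d$ or $n$. They therefore sum over all $i\geq C$ to less than $1/20$. Only the $C+1$ values $i\leq C$ remain. For these, Chebyshev with Theorem~\ref{thm:propsirregrandom}, applied to each $|V_j|$ with $j\leq C$, costs at most $(C+1)\cdot 272(d+1)/(\eps^2 n)$, which is small because $\lambda\ll 1/C,\eps$.
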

\begin{proof}We will start by showing that \ref{prop:smallorbigpots} holds with probability $\ge 3/4$. Note that for this it is sufficient to show that ``For each $0\leq i\leq d$, $\sum_{j=0}^i|V_j|\geq (1-\frac{\eps}{4})\frac{(i+1)n}{d+1}$'' holds with probability $\ge 7/8$. Indeed, if this were true, then by symmetry between $H_0$ and its complement in $G$ we would also have that ``For each $0\leq i\leq d$, $\sum_{j=0}^i|V_{d-j}|\geq (1-\frac{\eps}{4})\frac{(i+1)n}{d+1}$'' with probability $\ge 7/8$ --- and hence get that \ref{prop:smallorbigpots} holds with probability $\ge 3/4$ by a union bound.

Pick, then, $C$ with $\lambda\ll 1/C\ll \eps$, and note that we can assume that $\eps\ll 1$ as the property in the lemma is stronger when $\eps$ is smaller. 
By Theorem~\ref{thm:propsirregrandom} and Chebyshev's inequality (Lemma~\ref{lem:chebyshev}) with $k=\frac{\eps}{4}\sqrt{\frac { n}{17(d+1)}}$ we have that, for each $0\leq j\leq d$, 
\begin{equation}\label{eq:whendissmall}
    \P\left(|V_j|\ne \left(1\pm\frac{\eps}{4}\right)\frac n{d+1}\right)= \P\left(\Big||V_j|-\frac n{d+1}\Big|> \frac{\eps}{4}\sqrt{\frac { n}{17(d+1)}}\sqrt{\frac {17n}{d+1}}\right)\le \frac{272(d+1)}{\eps^2n}\le  \frac{1}{40C},
    \end{equation}
    where we have used that $d\leq\lambda n$ and $\lambda\ll 1/C,\eps$.
By a union bound, with probability $\ge 19/20$, we thus have that $|V_j|= \big(1\pm\frac{\eps}{4}\big)\frac n{d+1}$ for each $0\leq j\le C$. This implies that the first part of \ref{prop:smallorbigpots} holds for each $0\leq i\leq C$ with probability $\geq 19/20$. For the rest of the range,  we first prove the following.
\begin{claim}\label{claim:thisone}
Let $C\leq i\leq d$. Then, with probability $\ge 1-40\eps^{-1}\exp(-\frac{\eps^2i}{10^4})$, the following hold.
\begin{enumerate}[label ={\textbf{B\arabic{enumi}}}]
\item There are at least $(1-\frac{\eps}{8})\frac{in}{d+1}$ vertices $v\in V(G)$ with $x_v\leq (1- \frac{\eps}{16})\frac{i}{d+1}$.\label{prop:extremepots:part1}
\item There are at most $\frac{\eps i n}{16(d+1)}$ vertices $v\in V(G)$ with $x_v\leq (1-\frac{\eps}{16})\frac{i}{d+1}$ for which there are at least $i$ vertices $u\in N_G(v)$ with $x_u\geq 1-(1-\frac{\eps}{16})\frac{i}{d+1}$.\label{prop:extremepots:part2}
\end{enumerate}
\end{claim}
\claimproofstart[Proof of Claim~\ref{claim:thisone}]
Each $x_v$ is chosen independently, so the number of vertices $v\in V(G)$ with $x_v\le (1-\frac{\eps}{16})\frac{i}{d+1}$ is binomially distributed with mean $(1-\frac{\eps}{16})\frac{in}{d+1}$. By Lemma~\ref{lem:chernoff} with $\gamma=\frac{\eps/16}{1-\eps/16}\geq \frac{\eps}{17}$,  \ref{prop:extremepots:part1} thus fails with probability $\le2\exp\big(-\frac{1}{3}\left(\frac{\eps}{17}\right)^2(1-\frac{\eps}{16})\frac{in}{(d+1)}\big)\le 2\exp(-\frac{\eps^2i}{10^4})\le 2\eps^{-1}\exp(-\frac{\eps^2i}{10^4})$. Similarly, for each $v\in V(G)$, the number of vertices $u\in N_G(v)$ with $x_u\geq 1-(1-\frac{\eps}{16})\frac{i}{d+1}$ is binomially distributed with mean $(1-\frac{\eps}{16})\frac{id}{d+1}$. 
Thus, using Lemma~\ref{lem:chernoff} and independence, the probability that ``$x_v\le (1-\frac{\eps}{16})\frac{i}{d+1}$'' and ``there are at least $i$ vertices $u\in N_G(v)$ with $x_u\geq 1-(1-\frac{\eps}{16})\frac{i}{d+1}$'' is at most 
$\frac{i}{d+1}\cdot 2\exp\big(-\frac{1}{3}(\frac{\eps}{17})^2 (1-\frac{\eps}{4})\frac{id}{d+1}\big)\le \frac{i}{d+1}\cdot 2\exp(-\frac{\eps^2 i}{10^4})$.
Thus the expected number of such $v\in V(G)$ is at most $\frac{2in}{d+1}\exp(-\frac{\eps^2i}{10^4})$. Hence, by Markov's inequality, the probability that \ref{prop:extremepots:part2} does not hold is at most $\big(\frac{2in}{d+1}\exp(-\frac{\eps^2i}{10^4})\big)/\big(\frac{\eps i n }{16(d+1)}\big)=32\eps^{-1}\exp(-\frac{\eps^2i}{10^4})$. Thus, the probability that either \ref{prop:extremepots:part1} or \ref{prop:extremepots:part2} does not hold is at most $40\eps^{-1}\exp(-\frac{\eps^2i}{10^4})$, as required.
\claimproofend
By Claim~\ref{claim:thisone} and a union bound, we have that
 \ref{prop:extremepots:part1} and \ref{prop:extremepots:part2} hold for all $i=C,C+1, \dots, d$ with probability $\ge1-\sum_{i=C}^{\infty}40\eps^{-1}\exp(-\frac{\eps^2i}{10^4})=1-\frac{40\eps^{-1}\exp(-\eps^2C/10^4)}{1-\exp(-\eps^2/10^4)}\ge 1-8\cdot 10^5\eps^{-3}\exp(-\frac{\eps^2C}{10^4})\geq 19/20$. Assume, then, that \ref{prop:extremepots:part1} and \ref{prop:extremepots:part2} hold for all $i=C,C+1, \dots, d$. Let $C\leq i\leq d$. Call a vertex $v\in V(G)$ \emph{$i$-good} if $x_v\le (1-\frac{\eps}{16} )\frac{i}{d+1}$ and there are at most $i$ vertices $u\in N_G(v)$ with $x_u\ge 1- (1-\frac{\eps}{16} )\frac{i}{d+1}$. Using that $(1-\frac{\eps}{16} )\frac{i}{d+1}+ (1- (1-\frac{\eps}{16} )\frac{i}{d+1})=1$  we have that $i$-good vertices have degree $\le i$ in $H_0$.
Thus if \ref{prop:extremepots:part1} and \ref{prop:extremepots:part2} hold for some $C\leq i\leq d$, then $\sum_{j=0}^i|V_j|$ is at least the number of $i$-good vertices,  which is at least $(1-\frac{\eps}{8})\frac{in}{d+1}-\frac{\eps i n}{16(d+1)}\geq (1-\frac{\eps}{4})\frac{(i+1)n}{d+1}$ as $i\geq C$ and $1/C\ll \eps$. Thus, with probability $\geq 19/20$, the first part of \ref{prop:smallorbigpots} holds when $C\leq i\leq d$. Therefore, with probability $\geq 7/8$, the first part of \ref{prop:smallorbigpots} holds. 

\smallskip

Thus, it suffices to prove that \ref{prop:intervalsofpots} holds with probability $\geq 3/4$. 
For each $v\in V(G)$, condition on $x_v$ and let
$Y_v=|\{u\in N_G(v):x_v+x_u\geq 1\}|$. Then, $Y_v$ is binomially distributed with mean $\mu:= x_v d$. If $d^{3/5} \leq \mu \leq d$, then Lemma~\ref{lem:chernoff}, applied with
$\gamma=d^{3/5}/\mu$, gives
\[
\P(|Y_v-\mu|>d^{3/5})\le
2\exp\left(-\frac{d^{6/5}}{3\mu}\right)
\le 2\exp\left(-\frac{d^{1/5}}{3}\right).
\]
Otherwise, we have $\mu<d^{3/5}$ and as the lower-tail event is empty it
suffices to bound $\P(Y_v>\mu+d^{3/5})$. For this, let $Y'_v$ be a binomial random variable with distribution
    $\operatorname{Bin}(d,\mu'/d)$, where
    $\mu':=(\mu+d^{3/5})/2\geq \mu$.  Since the random variable $Y_v$ is stochastically
dominated by $Y'_v$, using Lemma~\ref{lem:chernoff} with
$\gamma=1$, we get
\[
\P(Y_v>\mu+d^{3/5})
\leq
\P\left(Y'_v>\mu+d^{3/5}\right)
\leq
\P\left(Y'_v>2 \mu'\right)
\leq
2\exp\left(-\frac{\mu+d^{3/5}}{6}\right).
\]
Thus, for all $v\in V(G)$, $
\P(|d_{H_0}(v)-x_v  d|>d^{3/5})
\leq 2\exp\left(-\frac{d^{1/5}}{6}\right)$.
Since $d=\omega(\log^{5} n)$,  with probability $\ge 1-1/8$ we have
\begin{enumerate}[label ={\textbf{B\arabic{enumi}}}]\addtocounter{enumi}{2}
\item For each $v\in V(G)$, $|d_{H_0}(v)-x_v  d|\leq d^{3/5}$.\label{prop:intervalsofpots:part1}
\end{enumerate}
Furthermore, using Lemma~\ref{lem:chernoff}, for any $a,b\in [0,1]$ with $b\ge a+d^{-2/5}$ the following holds with probability at least $1-2\exp(-\frac{1}{3}(\frac{\eps}{8})^2d^{-2/5}n)\ge 1-(16d(d+1))^{-1}$.
\begin{enumerate}[label ={\textbf{B\arabic{enumi}}}]\addtocounter{enumi}{3}
\item  $|\{v\in V(G):a\leq x_v  \leq b\}|=(1\pm \eps/8)(b-a)n$.\label{prop:intervalsofpots:part2}
\end{enumerate}
Thus, by a union bound over all $0\le i\le d$ and integer $\ell\ge d^{2/3}/3$ with $i+\ell \le d$ (noting there are at most $d(d+1)$ such choices) with probability $\ge 3/4$, we have that \ref{prop:intervalsofpots:part1} holds and that \ref{prop:intervalsofpots:part2} holds with $a=\frac{i}{d}+d^{-2/5}$, $b=\frac{i+\ell}{d}-d^{-2/5}$ and with $a=\max\{\frac{i}{d}-d^{-2/5},0\}$, $b=\min\{\frac{i+\ell}{d}+d^{-2/5},1\}$ for each $0\le i\le d$ and $\ell\ge d^{2/3}/3$ with $i+\ell \le d$. Assuming this, we will now show that \ref{prop:intervalsofpots} holds.

Consider, then, some $0\le i\le d$ and $\ell\ge d^{2/3}/3$ with $i+\ell \le d$. Let 
\[
X=\{v\in V(G): i+d^{3/5}\le x_v  d\le i+\ell-d^{3/5}\}\] and \[Y=\{v\in V(G): i-d^{3/5}\le x_v  d\le i+\ell+d^{3/5}\}.\] 
Note that, by \ref{prop:intervalsofpots:part1}, we have $X\subset \bigcup_{j=i}^{i+\ell} V_j\subset Y$. 
Thus, by \ref{prop:intervalsofpots:part2} applied with $a=\frac{i}{d}+d^{-2/5}$ and $b=\frac{i+\ell}{d}-d^{-2/5}$ (so that $b-a=\ell/d-2d^{-2/5}\geq d^{-2/5}$), we have 
\[
|X|\geq \left(1-\frac{\eps}{8}\right)\cdot \frac{\ell-2d^{3/5}}{d}\cdot n\geq \left(1-\frac{\eps}{4}\right)\frac{(\ell+1)}{d+1}\cdot n,
\]
where the last inequality holds as $\ell\geq d^{2/3}/3$ and $d\gg \eps^{-1}$. Similarly, by \ref{prop:intervalsofpots:part2} applied with $a=\max\{\frac{i}{d}-d^{-2/5},0\}$ and $b=\min\{\frac{i+\ell}{d}+d^{-2/5},1\}$,
\[
|Y|\leq \left(1+\frac{\eps}{8}\right)\cdot \frac{\ell+2d^{3/5}}{d}\cdot n\leq \left(1+\frac{\eps}{4}\right)\frac{(\ell+1)}{d+1}\cdot n,
\]
and thus \ref{prop:intervalsofpots} holds.
\end{proof}


\subsection{A degree correction function for the random irregular subgraph}\label{sec:degreecorrection}
To form our correction function using \ref{prop:smallorbigpots} and \ref{prop:intervalsofpots}, for some appropriate $r$ we will first partition $[0,d]$ into intervals $I_0,I_1,\dots,I_{2r+1}$ in increasing order, such that all but the first and last intervals have the same size, $d^{2/3}/2$. We then assign $\psi$ for most of the vertices $v\in V(G)$ with $d_{H_0}(v)\in I_0\cup I_1\cup\dots \cup I_r$ in order to guarantee that, for each $i\in I_0\cup I_1\cup\dots \cup I_r$, $d_{H_0}(v)-\psi(v)=i$ for at least $(1-\eps)n/(d+1)$ such vertices $v\in V(G)$. Where $i\in I_j$ with $j\in [r]$, we will do this using vertices $v$ with $d_{H_0}(v)\in I_{j-1}$, using \ref{prop:intervalsofpots}. This ensures that each $|\psi(v)|$ is not too large for any such assignment, and $\psi(v)\leq 0$ for each of these vertices $v$. Where $i\in I_0$ (in addition to the case $i\in I_1$), we will also do this using vertices $v$ with $d_{H_0}(v)\in I_{0}$, using that $I_0$ will be a larger interval than $I_1$. With more care, using \ref{prop:smallorbigpots}, we make sure that all of these values of $\psi(v)$ are also at most 0. Then, for the vertices $v$ with $d_{H_0}(v)\in I_0\cup I_1\cup\dots\cup I_r$ for which $\psi(v)$ has not yet been chosen, we assign $\psi$-values to `spread out' the values of $d_{H_0}(v)-\psi(v)$ over these vertices so that we do not affect too much the nice distribution we have created, and so that all these values are between $-d^{3/4}$ and $0$. 
This will allow us to choose $\psi$-values for all vertices $v$ with $d_{H_0}(v)\in I_0\cup I_1\cup\dots\cup I_r$ with the properties we want (see \ref{prop:psi:1} and \ref{prop:psi:2} later). We then similarly assign $\psi$-values for all vertices $v$ with $d_{H_0}(v)\in I_{r+1}\cup I_{r+2}\cup\dots\cup I_{2r+1}$, and show that together this gives us the desired function $\psi$. We now carry this all out in detail, in order to prove the following result.

\begin{lemma}\label{lem:randomirregularandcorrections}
Let $1/n\ll \lambda\ll \eps\leq 1$ and let $d\leq \lambda n$ satisfy $d=\omega(\log^5 n)$. Let $G$ be a $d$-regular $n$-vertex graph. Then, there is a spanning subgraph $H_0\subset G$ and a function $\psi:V(G)\to \mathbb{Z}$ such that the following hold.
\begin{enumerate}[label = {\textbf{\emph{C\arabic{enumi}}}}]
\item For each $v\in V(G)$, $|\psi(v)|\leq d^{3/4}$.\labelinthm{prop:corr:smallcorrs}
\item  For each $i\in [0,d]$, $|\{v\in V(G):d_{H_0}(v)-\psi(v)=i\}|=(1\pm \eps)\frac{n}{d+1}$.\labelinthm{prop:corr:goodpots}
\item For each $v\in V(G)$, if $\psi(v)> 0$ then $v$ has at least $d/2$ neighbours in $H_0$.\labelinthm{prop:corr:options1}
\item For each $v\in V(G)$, if $\psi(v)< 0$ then $v$ has at least $d/2$ neighbours in $G\setminus H_0$.\labelinthm{prop:corr:options2} 
\end{enumerate}
\end{lemma}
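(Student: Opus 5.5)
We will take $H_0$ to be an instance of the irregular random subgraph satisfying \ref{prop:smallorbigpots} and \ref{prop:intervalsofpots}, which exists by Lemma~\ref{lem:propsofrandomirregular}. We then construct $\psi$ deterministically from the sets $V_i=\{v:d_{H_0}(v)=i\}$. By the symmetry between $H_0$ and $G\setminus H_0$ (which swaps $V_i$ with $V_{d-i}$ and negates $\psi$), it suffices to define $\psi$ on vertices with degree at most roughly $d/2$, making $\psi\le 0$ there. The other half is then handled symmetrically with $\psi\ge 0$. Under this plan, \ref{prop:corr:options1} and \ref{prop:corr:options2} come almost for free. Any $v$ with $\psi(v)<0$ will have $d_{H_0}(v)\le d/2$, so it has at least $d/2$ neighbours in $G\setminus H_0$. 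The symmetric statement handles $\psi(v)>0$. The only care needed is at the central interval.

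Concretely, partition $[0,d]$ into consecutive intervals $I_0,I_1,\dots,I_{2r+1}$. The inner intervals have length $d^{2/3}/2$, while $I_0$ and $I_{2r+1}$ are longer, of length between roughly $d^{2/3}/2$ and $d^{2/3}$. The cut between $I_r$ and $I_{r+1}$ should sit at about $d/2$.

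For $j\in[r]$, we use $V(I_{j-1}):=\bigcup_{i\in I_{j-1}}V_i$ to fill the pots $i\in I_j$. By \ref{prop:intervalsofpots}, $|V(I_{j-1})|=(1\pm\eps/4)|I_{j-1}|n/(d+1)$. We match vertices of $V(I_{j-1})$ to pots in $I_j$ so that each pot $i\in I_j$ receives exactly $\lceil(1-\eps/2)n/(d+1)\rceil$ vertices. For each assigned $v$ we set $\psi(v)=d_{H_0}(v)-i<0$, which satisfies $|\psi(v)|\le 2d^{2/3}$.

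For the pots in $I_0$, we cannot shift upward from below. Instead we use $V(I_0)$ itself, together with \ref{prop:smallorbigpots}. Process the pots $i\in I_0$ in increasing order. For each $i$, \ref{prop:smallorbigpots} gives at least $(1-\eps/4)(i+1)n/(d+1)$ vertices of degree at most $i$. We assign to pot $i$ vertices of degree at most $i$ not yet used, so that $\psi(v)=d_{H_0}(v)-i\le 0$. This is a greedy/Hall-type argument: the pots $0,\dots,i$ need $(i+1)(1-\eps/2)n/(d+1)$ vertices in total, and supply is available.

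Pots in $I_1$ are filled from $I_0$ as above. This works because, with $I_0$ longer, $V(I_0)$ has enough vertices beyond those used for pots in $I_0$. Alternatively, we would simply let $|I_0|$ be about twice $|I_1|$ and fill both $I_0$ and $I_1$ from $V(I_0)$ by the greedy order argument, counting via \ref{prop:smallorbigpots} and \ref{prop:intervalsofpots}.

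The leftover vertices in $V(I_0\cup\dots\cup I_r)$ form a set of size at most $O(\eps)\,|I_j|n/(d+1)$ per block, by the upper bound in \ref{prop:intervalsofpots}. These get negative $\psi$-values chosen to spread them out. For leftover $v\in V(I_j)$, we assign $\psi(v)\in[-d^{3/4},0]$ so that $d_{H_0}(v)-\psi(v)$ cycles through a window of about $d^{3/4}$ values above it, in round-robin fashion. Each pot then receives at most about $O(\eps)\,n/(d+1)\cdot d^{2/3}/d^{3/4}+O(1)$ leftover vertices, which is $o(\eps n/(d+1))$ since $d$ is large. This gives the upper bound in \ref{prop:corr:goodpots} with total $(1\pm\eps)n/(d+1)$, and \ref{prop:corr:smallcorrs} holds.

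The main obstacle is twofold. First, the bookkeeping at the low end $I_0$ must guarantee both $\psi\le 0$ and correct pot counts there; this is where \ref{prop:smallorbigpots} is essential. Second, the spreading must avoid pushing leftover vertices across the midpoint into pots served by the other half. We would handle the latter by letting pots near $d/2$ accept spread vertices from both sides, and checking that the excess is still $o(\eps n/(d+1))$.
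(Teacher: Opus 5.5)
Your architecture matches the paper's: blocks of length $\ell=d^{2/3}/2$, each block's vertices shifted up one block via A2, the bottom handled with A1 and a longer $I_0$, leftovers spread upward within distance $d^{3/4}$, and everything mirrored above $d/2$ with spillover allowed at the midpoint. However, two quantitative steps fail as you state them.

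\textbf{The length of $I_0$.} In your scheme $V(I_0)$ must fill all $|I_0|+\ell$ pots of $I_0\cup I_1$, each of size $\lceil(1-\eps/2)n/(d+1)\rceil$. But A2 gives $|V(I_0)|\le(1+\eps/4)|I_0|n/(d+1)$, and $(1+\eps/4)|I_0|\ge(1-\eps/2)(|I_0|+\ell)$ forces $|I_0|\ge\frac{4-2\eps}{3\eps}\,\ell$. So with $|I_0|\le 2\ell$ (both of your suggestions) this step is impossible once $\eps<1/2$. Lengthening $I_0$ only buys a surplus of order $\eps|I_0|n/(d+1)$, and that surplus must cover $\ell$ entire extra pots. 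The paper's fix is $C\ell\le|I_0|\le 2C\ell$ with $1/C\ll\eps$. Then A1 gives $|Z_0|\ge(1-\eps/4)|I_0|\frac{n}{d+1}\ge(1-\eps/3)(|I_0|+\ell)\frac{n}{d+1}$, so the sorted-order assignment places all pots of $I_0\cup I_1$ inside $Z_0$ with $\psi\le 0$. The bound $|\psi|\le(2C+1)\ell\le d^{3/4}$ still holds because $C$ is a constant.

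\textbf{The leftover count.} Your bound $O(\eps)\frac{n}{d+1}\cdot d^{2/3}/d^{3/4}$ is the contribution of a single block. Each pot lies in the windows of about $d^{3/4}/\ell$ blocks, so the excess per pot is of order $\eps n/(d+1)$, not $o(\eps n/(d+1))$. This is unavoidable: your scheme leaves at least roughly $\eps n/4$ leftover vertices to distribute among $d+1$ pots. The same applies at the midpoint. This excess must therefore be budgeted against the pot target. With target $(1-\eps/2)n/(d+1)$, an interior block may leave $(3\eps/4)\ell n/(d+1)$ vertices. That gives up to about $(3\eps/4)n/(d+1)$ from each half at pots near $d/2$, i.e.\ already $(1+\eps)n/(d+1)$ before rounding and the $V(I_0)$, $V(I_r)$ leftovers are added.

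The paper takes $n_{\min}=(1-\eps/3)n/(d+1)$, so each block leaves at most $(7\eps/12)\ell n/(d+1)$. Spreading this evenly over the next $C^2$ intervals gives at most $(2\eps/3)n/(d+1)$ extra per pot from each half (D1--D4), for a total of $n_{\min}+(4\eps/3)n/(d+1)=(1+\eps)n/(d+1)$.

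Note also that $V(I_r)$ is leftover in its entirety, not an $O(\eps)$ fraction, since $I_{r+1}$ is served from above. This is harmless only because it is spread over many pots.
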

\begin{proof} Let $C$ be such that $\lambda \ll 1/C\ll \eps$. Using Lemma~\ref{lem:propsofrandomirregular}, let $H_0\subset G$ be such that \ref{prop:smallorbigpots} and \ref{prop:intervalsofpots} hold. Let $n_{\min}=(1-\frac{\eps}{3})\frac{n}{d+1}$ and $n_{\max}=(1+\eps)\frac{n}{d+1}$. We will choose $\psi:V(G)\to \mathbb{Z}$ with $n_{\min}\le |\{v\in V(G):d_{H_0}(v)-\psi(v)=i\}|\le n_{\max}$ for each $i\in [0,d]$.

Let $\ell=d^{2/3}/2$.  Find an integer $r$ such that we can divide $[0,d]=I_0\cup I_1\cup \dots \cup I_{2r}\cup I_{2r+1}$ so that \textbf{a)} $|I_j|=\ell$ for each $j\in [2r]$, \textbf{b)} $C\ell\leq |I_0|,|I_{2r+1}|\leq 2C\ell$, \textbf{c)} $\lfloor d/2 \rfloor\in I_r$ and $\lfloor d/2 \rfloor+1\in I_{r+1}$, and \textbf{d)} for each $j\in [0,2r]$, $\max I_j<\min I_{j+1}$. 

Let $V_i=\{v\in V(G):d_{H_0}(v)=i\}$ for each $i\in [0,d]$.
For each $j\in [0,2r+1]$, let $Z_j=\cup_{i\in I_j}V_i$. We will define a function $\psi:V(G)\to \mathbb{Z}$ such that, if $v\in Z_0\cup Z_1\cup\dots \cup Z_r$ then $\psi(v)\leq 0$ and if $v\in Z_{r+1}\cup Z_{r+2}\cup\dots\cup  Z_{2r+1}$ then $\psi(v)\geq 0$. Thus, \ref{prop:corr:options1} and \ref{prop:corr:options2} will hold. We start by defining $\psi$ on $Z_0\cup Z_1\cup \dots \cup Z_r$, for the following claim.
\begin{claim}\label{clm:new}
    There exists $\psi:\bigcup_{j=0}^r Z_j\to \mathbb{Z}$ such that $-d^{3/4}\le \psi(v)\le 0$ for each $v\in \bigcup_{j=0}^r Z_j$ and the following hold.
    \begin{enumerate}[label = {\textbf{{D\arabic{enumi}}}}]
        \item For all $i\in \bigcup_{j=0}^r I_j$, we have  $n_{\min}\le |\{v\in \bigcup_{j=0}^r Z_j:d_{H_0}(v)-\psi(v)=i\}|\le n_{\min}+\left(\frac{2\eps}{3}\right) \frac{n}{d+1}$.\label{prop:psi:1}
        \item For all $i\in \bigcup_{j=r+1}^{2r+1} I_j$, we have  $|\{v\in \bigcup_{j=0}^r Z_j:d_{H_0}(v)-\psi(v)=i\}|\le \left(\frac{2\eps}{3}\right) \frac{n}{d+1}$.\label{prop:psi:2}
    \end{enumerate}
\end{claim}
\claimproofstart[Proof of Claim~\ref{clm:new}] Order $V(G)=\{v_1, \dots, v_n\}$ with $d_{H_0}(v_1)\le \dots \le d_{H_0}(v_n)$. For each $i\in I_0\cup I_1$, let $U_i=\{v_{n_{\mathrm{min}}i+1}, \dots, v_{n_{\mathrm{min}}(i+1)}\}$. For any $t\in I_0\cup I_1$, since $\bigcup_{i\in [0,t]}V_i=\{v_s: d_{H_0}(v_s)\le t\}$, by \ref{prop:smallorbigpots} we have $|\bigcup_{i\in [0,t]}V_i|\ge (t+1)\cdot n_{\mathrm{min}}=|\bigcup_{i\in [0,t]}U_i|$, and, hence, $\bigcup_{i\in [0,t]}U_i\subset \bigcup_{i\in [0,t]}V_i$. This implies that, for all $v\in U_i$ with $i\in I_0\cup I_1$, we have $d_{H_0}(v)\le i$, while, as $i\in I_0\cup I_1$, $\ell=d^{2/3}/2$, and $d=\omega(\log^5n)$, we also have that $d_{H_0}(v)-i\geq -i\ge - (2C+1)\ell\geq -d^{3/4}$. Furthermore, by \ref{prop:smallorbigpots}, and as $\ell/d=d^{-1/3}/2$ and $d=\omega(\log^5n)$,\\
\[
\Big|\bigcup_{i\in {I_0}}V_i\Big|\ge \left(1-\frac{\eps}{4}\right)\frac{|I_0|n}{d+1}\ge \left(1-\frac{\eps}{3}\right)\frac{(|I_0|+\ell)n}{d+1}=   (|I_0|+\ell) \cdot n_{\mathrm{min}}=\Big|\bigcup_{i\in I_0\cup I_1}U_i\Big|,
\]
and thus $\bigcup_{i\in I_0\cup I_1}U_i\subset \bigcup_{i\in I_0}V_i=Z_0$.

Then, using \ref{prop:intervalsofpots}, for each $2\leq j\leq r$, take disjoint sets $U_i$, $i\in I_{j}$, with size $n_{\mathrm{min}}$ each in $Z_{j-1}$. Note  that then, for each $2\leq j\leq r, i\in I_{j}, v\in U_i$, we have $d_{H_0}(v)\le \max I_{j-1}< \min I_j\le i$ and $d_{H_0}(v)\geq \min I_{j-1}\ge \min I_j-2\ell\geq i-d^{3/4}$ for each $v\in U_i$. To summarise, in total we have disjoint sets $U_i$, $i\in I_0\cup I_1\cup \ldots \cup I_r$, in $Z_0\cup Z_1\cup \ldots \cup Z_{r-1}$, each with size $n_{\mathrm{min}}$, such that $-d^{3/4}\leq d_{H_0}(v)-i\leq 0$ for each $v\in U_i$, $i\in I_0\cup\dots\cup I_r$.
Furthermore, for each $j\in [r-1]$, by \ref{prop:intervalsofpots}, we have 
\begin{equation}\label{eq:Zprime}
\Big|Z_{j}\setminus \bigcup_{i\in I_{j+1}}U_i\Big|\leq \left(1+\frac{\eps}{4}\right)\frac{\ell\cdot n}{d+1}-\ell \cdot n_{\mathrm{min}}= \left(\frac{7\eps}{12}\right)\frac{\ell\cdot n}{d+1}.
\end{equation}

For each $i\in I_0\cup I_1\cup\dots \cup I_r$ and $v\in U_i$, set $\psi(v)=d_{H_0}(v)-i$. For each $j\in [0,r]$, let $Z'_j=Z_j\setminus \bigcup_{i\in I_0\cup I_1\cup\dots \cup I_r}U_i$, so that, if $j\in [r-1]$, then $|Z'_j|\leq 7\eps \ell n/12(d+1)$ by \eqref{eq:Zprime}. Then, for each $j\in [0,r]$, pick some function $f_j: Z_j'\to I_{j+1}\cup I_{j+2}\cup \dots \cup I_{j+C^2}$ such that $|f_j^{-1}(i)|\in \{\lfloor |Z'_j|/C^2\ell\rfloor, \lceil |Z'_j|/C^2\ell\rceil\}$ for each $i\in I_{j+1}\cup I_{j+2}\cup \dots \cup I_{j+C^2}$ (using simply that each $I_{j'}$ with $j+1\leq j'\leq j+C^2$ has size $\ell$). For each $j\in [0,r]$ and $v\in Z_j'$, set $\psi(v)=d_{H_0}(v)-f_j(v)$, and note that, as $v\in V_i$ for some $i\in I_j$ and $\min I_{j+1}\leq f_j(v)\leq \max I_{j+C^2}$, we have that $0\geq \psi(v)\geq -(C^2+1)\ell\geq -d^{3/4}$.

Noting that we have chosen $\psi(v)$ for each $v\in \bigcup_{j=0}^{r}Z_j$, we now show that \ref{prop:psi:1} and \ref{prop:psi:2} hold. Firstly, for each $i\in \bigcup_{j=0}^rI_j$, we have that 
\[
\Big|\Big\{v\in \bigcup_{j=0}^r Z_j:d_{H_0}(v)-\psi(v)=i\Big\}\Big|\geq |U_i|= n_{\min},
\]
so that the lower bound in \ref{prop:psi:1} holds. Furthermore, for each $j'\in [0,r]$ and $i\in I_{j'}$, we have
\begin{align*}
\Big|\Big\{v\in \bigcup_{j=0}^r Z_j:d_{H_0}(v)-\psi(v)=i\Big\}\Big|&\leq |U_i|+\sum_{j=0}^r|f^{-1}_j(i)|\leq n_{\mathrm{min}}+\left\lceil\frac{|Z_0'|}{C^2\ell}\right\rceil+
\sum_{j=\max\{1,j'-C^2\}}^{j'-1}\!\!\left\lceil\frac{|Z_j'|}{C^2\ell}\right\rceil\\
&\!\!\!\!\!\overset{\ref{prop:intervalsofpots},\eqref{eq:Zprime}}{\leq} n_{\mathrm{min}}+\left\lceil\frac{1}{C}\cdot 2\frac{n}{(d+1)}\right\rceil
+C^2\cdot \left\lceil\left(\frac{7\eps}{12}\right)\frac{n}{C^2(d+1)}\right\rceil\\
&\leq n_{\mathrm{min}}+\left(\frac{2\eps}{3}\right) \frac{n}{d+1},
\end{align*}
where we have used that $1/C\ll \eps$. Therefore, the upper bound in \ref{prop:psi:1} holds, and thus \ref{prop:psi:1} holds.

Finally, for each $i\in \bigcup_{j=r+1}^{2r+1} I_j$, by construction, if $v\in \bigcup_{j=0}^r Z_j$ with $d_{H_0}(v)-\psi(v)=i$, then $v\in Z'_j$ for some $r-C^2<j\leq r$. Therefore, 

\begin{align*}
\Big|\Big\{v\in \bigcup_{j=0}^r Z_j:d_{H_0}(v)-\psi(v)=i\Big\}\Big|&\le \left\lceil\frac{{|Z_r|}}{C^2\ell}\right\rceil+\sum_{j=r-C^2+1}^{r-1}\!\!\left\lceil\frac{|Z_j'|}{C^2\ell}\right\rceil\\
&\!\!\!\!\!\overset{\ref{prop:intervalsofpots},\eqref{eq:Zprime}}{\leq} \frac{2}{C^2}\cdot \frac{n}{d+1}+C^2\cdot \left\lceil\left(\frac{7\eps}{12}\right)\frac{n}{C^2(d+1)}\right\rceil\leq \left(\frac{2\eps}{3}\right) \frac{n}{d+1},
\end{align*}
where we have again used that $1/C\ll \eps$. Thus, \ref{prop:psi:2} holds.
\claimproofend
Using a symmetric argument, we can define  $\psi$ on $Z_{r+1}\cup \dots\cup Z_{2r+1}$ such that $0\le \psi(v)\le d^{3/4}$ for each $v\in Z_{r+1}\cup \dots\cup Z_{2r+1}$ and the following hold.
\begin{enumerate}[label = {\textbf{{D\arabic{enumi}}}}]\addtocounter{enumi}{2}
 \item For all $i\in \bigcup_{j=r+1}^{2r+1} I_j$, we have $n_{\min}\le |\{v\in \bigcup_{j=r+1}^{2r+1} Z_j:d_{H_0}(v)-\psi(v)=i\}|\le n_{\min}+\left(\frac{2\eps}{3}\right) \frac{n}{d+1}$.\label{prop:psi:3}
 \item For all $i\in \bigcup_{j=0}^{r} I_j$, we have $|\{v\in \bigcup_{j=r+1}^{2r+1} Z_j:d_{H_0}(v)-\psi(v)=i\}|\le \left(\frac{2\eps}{3}\right) \frac{n}{d+1}$.\label{prop:psi:4}
\end{enumerate}

We can now observe that $\psi$ has the properties we require. Property \ref{prop:corr:smallcorrs}  is immediate from the allowed range for $\psi(v)$ above.
For each $i\in [0,d]$, using \ref{prop:psi:1} and \ref{prop:psi:4} if $i\in \bigcup_{j=0}^{r} I_j$ and using \ref{prop:psi:2} and \ref{prop:psi:3} if $i\in \bigcup_{j=r+1}^{2r+1} I_j$, we have 
$n_{\min}\le |\{v\in V(G):d_{H_0}(v)-\psi(v)=i\}|\le n_{\max}$, where we have used that $n_{\min}+2\cdot \left(\frac{2\eps}{3}\right) \frac{n}{d+1}=n_{\max}$. Thus, \ref{prop:corr:goodpots} holds. 
 Property \ref{prop:corr:options2} holds because the only vertices with $\psi (v)<0$ are in $Z_0\cup \dots \cup Z_r$, which are the vertices with degree in $H_0$ in $I_0\cup \dots \cup I_r=\{i:0\leq i\leq \lfloor d/2\rfloor\}$. Property \ref{prop:corr:options1} holds similarly.
\end{proof}


\subsection{Proof of Theorem~\ref{thm:AWasympt}}\label{sec:finalmainproof}

Finally, using the proof sketch in Section~\ref{sec:sketch} and the subgraph $H_0$ and function $\psi$ satisfying \ref{prop:corr:smallcorrs}--\ref{prop:corr:options2}, we will prove Theorem~\ref{thm:AWasympt}. The main detail we have not yet covered is the choice of the random set $Z$ when $d=\Omega(n/\log n)$. Due to this case, we pick $Z\subset V(G)$ randomly by including each element with probability $p=\eps/t^5$ (with $t=\lfloor \log_2d\rfloor$), but throw out any vertex $v\in Z$ if, in the random process described loosely in Section~\ref{sec:sketch}, the degree of $v$ could be moved by some $j\in I-I$, together with too many other vertices, to have the same degree in the final graph $H$ (see the condition at \eqref{eq:pickingZ}).  
Randomly orienting the edges of $G$ to get $\vec{G}$, we will then show (for Claim~\ref{clm:directededges} below) that vertices $v$ with $\psi(v)>0$ will have many out-edges into $Z$ in the induced orientation of $H_0$ (see~\ref{prop:outneighboursplus}) and a similar property for vertices $v$ with $\psi(v)<0$ (see~\ref{prop:outneighboursminus}). We then carry out our random process (see \ref{step:a}--\ref{step:d} below) to produce $H$ and show (for Claim~\ref{clm:propsofHrGr} below) that this process will make the corrections requested by $\psi$ with some additional unwanted changes to the degrees of vertices in $Z$ (given by the in-degrees of changed edges). Finally, from the choice of $Z$ and the properties of the random process (in particular~\ref{prop:final:2}) we conclude that $H$ is as desired.

\begin{proof}[Proof of Theorem~\ref{thm:AWasympt}] For notational convenience, we will prove the result with $2\eps$ in place of $\eps$, which is sufficient as $\eps>0$ is arbitrary. Let $n_0$ and $\lambda$ be such that $1/n_0\ll \lambda\ll \eps$. Note that it is sufficient to prove the result for each $n\geq n_0$, as the property in Theorem~\ref{thm:AWasympt} will then hold with $\lambda'=\min\{\lambda,1/n_0\}$ (with $2\eps$ in place of $\eps$).

Let, then, $n\geq n_0$ and $d\leq \lambda n$, and let $G$ be a $d$-regular, $n$-vertex graph. If $d=o(n^{1/3})$, then note that it follows from Theorem~\ref{thm:propsirregrandom} and Lemma~\ref{lem:chebyshev} (working similarly to \eqref{eq:whendissmall}) that, with high probability, there are $(1\pm 2\eps)n/(d+1)$ vertices with degree $i$ in the irregular random subgraph $H_0$ of $G$ for each $i\in [0,d]$, in which case we can take $H=H_0$. Thus, we can certainly assume that $d=\omega(\log^{25}n)$.

Using Lemma~\ref{lem:randomirregularandcorrections}, take a spanning subgraph $H_0\subset G$ and a function $\psi:V(G)\to \mathbb{Z}$ such that \ref{prop:corr:smallcorrs}--\ref{prop:corr:options2} hold. The rest of the proof proceeds as follows. First we choose a random set $Z\subset V(G)$ and prove properties  \ref{prop:WZsmall}--\ref{prop:outneighboursminus} hold with high probability. Then we use a random process to build graphs $H_r\subset H_0$, $G_r\subset G\setminus H_0$ and prove properties \ref{prop:final:1}--\ref{prop:final:3} hold with high probability. Finally, we set $H=H_r+(G_0\setminus G_r)$ and show that (a subset of) the properties \ref{prop:corr:smallcorrs}--\ref{prop:corr:options2}, \ref{prop:WZsmall}--\ref{prop:outneighboursminus}, \ref{prop:final:1}--\ref{prop:final:3} imply that \eqref{eq:balanceddegrees} holds. 

For each $i\in [0,d]$, let $W_i=\{v\in V(G):d_{H_0}(v)-\psi(v)=i\}$, to get the partition $V(G)=W_0\cup W_1\cup\dots \cup W_d$. For convenience of notation, for any $i\in \mathbb{Z}$ with $i<0$ or $i>d$, let $W_i=\emptyset$.
Let $t=\lfloor \log_2 d\rfloor$. Let $I=\{0,1,2,4,\dots,2^t\}$. Let $p=\eps/t^5$. For each $v\in V(G)$, choose $y_v\in [0,1]$ independently and uniformly at random.
 Let $J$ be the set of $i\in [0,d]$ such that
\begin{equation}\label{eq:pickingZ}
\Bigg|\bigg\{v\in \bigcup_{j\in I+I-I-I}W_{i-j}:y_v\leq p\bigg\}\Bigg|\leq \frac{\eps n}{d+1}.
\end{equation}
Let $Z$ be the set of vertices $v\in V(G)$ such that $y_v\leq p$ and $v\in W_i$ for some $i\in J$. Then, by the choice of $J$, we have the following property.

\begin{enumerate}[label = {\textbf{E\arabic{enumi}}}]
\item For each $i\in [0,d]$,\label{prop:WZsmall}
\[ 
\sum_{j\in I-I}\left|W_{i-j}\cap Z\right|\leq \frac{\eps n}{d+1}.
\]
\end{enumerate}
Indeed, if $W_{i-j}\cap Z\neq\emptyset$ for some $j \in I-I$, then, using the definition of $Z$, we have $k:=i-j\in J$. For every
$j' \in I-I$, we have $W_{i-j'}=W_{k+j-j'}=W_{k-(j'-j)}$,
and $j'-j\in (I-I)-(I-I)=I+I-I-I$. Hence,
\[
\bigcup_{j'\in I-I} W_{i-j'}
\;\mathrel{\mathlarger{\subset}}\;
\bigcup_{s\in I+I-I-I} W_{k-s}.
\]
Since $k\in J$, the latter set contains at most $\varepsilon n/(d+1)$ vertices $v$
with $y_v\le p$, giving \ref{prop:WZsmall}.

For each $uv\in E(G)$, independently orient $uv$ uniformly at random, calling the resulting oriented graph $\vec{G}$. From now on, for any $H\subset G$, we will denote the corresponding oriented graph by $\vec{H}$.
\begin{claim} With high probability, we have the following properties.\label{clm:directededges}
\begin{enumerate}[label = {\textbf{E\arabic{enumi}}}]\addtocounter{enumi}{1}
\item For each $v\in V(G)$, if $\psi(v)> 0$ then $v$ has at least $d^{0.8}$ out-neighbours in $Z$ in $\vec{H}_0$.\label{prop:outneighboursplus}
\item For each $v\in V(G)$, if $\psi(v)< 0$ then $v$ has at least $d^{0.8}$ out-neighbours in $Z$ in $\vec{G}\setminus \vec{H}_0$.\label{prop:outneighboursminus}
\end{enumerate}
\end{claim}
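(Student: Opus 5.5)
The plan is to condition on $H_0$ and $\psi$, which are fixed by Lemma~\ref{lem:randomirregularandcorrections}, and work with the two independent sources of randomness: the labels $y_w$ and the orientation. I treat \ref{prop:outneighboursplus}; \ref{prop:outneighboursminus} is identical with $G\setminus H_0$ and \ref{prop:corr:options2} in place of $H_0$ and \ref{prop:corr:options1}. The argument has two stages. First, show that with high probability every $v$ with $\psi(v)>0$ has at least $pd/8$ neighbours in $N_{H_0}(v)\cap Z$. Second, expose the orientation. Since the orientation is independent of $Z$, the number of out-neighbours of $v$ in $Z$ in $\vec H_0$ is then $\mathrm{Bin}(|N_{H_0}(v)\cap Z|,1/2)$. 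We have $pd/8=\eps d/(8t^5)\geq d^{0.9}$ because $d=\omega(\log^{25}n)$. So Lemma~\ref{lem:chernoff} with $\gamma=1/2$ gives at least $d^{0.8}$ such out-neighbours except with probability $\exp(-\Omega(d^{0.9}))=o(1/n)$, and a union bound over $v$ finishes.

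For the first stage fix $v$ with $\psi(v)>0$ and set $N=N_{H_0}(v)$, so $|N|\geq d/2$ by \ref{prop:corr:options1}. The number of $u\in N$ with $y_u\leq p$ is $\mathrm{Bin}(|N|,p)$, which is at least $pd/4$ except with probability $\exp(-\Omega(pd))$. It remains to show that few of these $u$ are discarded, i.e.\ lie in $W_k$ with $k\notin J$. For a fixed $k$, the set $S_k=\bigcup_{j\in I+I-I-I}W_{k-j}$ is a union of at most $(t+2)^4$ pots. By \ref{prop:corr:goodpots} each pot has size at most $(1+\eps)n/(d+1)$, so $\mathbb{E}|\{w\in S_k:y_w\leq p\}|\leq \eps(1+\eps)(t+2)^4t^{-5}\,n/(d+1)\leq \frac{\eps}{2}\cdot\frac{n}{d+1}$. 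By Lemma~\ref{lem:chernoff}, $\P(k\notin J)\leq \exp(-c\,\eps n/(d+1))\leq \exp(-c\eps/\lambda)=:\delta$, which is tiny. When $d\leq n^{1-\Omega(1)}$ this probability is even $o(1/d)$, so $J=[0,d]$ with high probability and the first stage is immediate.

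The main obstacle is the range $d=\Theta(\lambda n)$-ish. There $\delta$ is only a small constant, and the events $\{k(u)\in J\}$ for $u\in N$ depend on the $y$-values inside $N$ itself, so a plain Markov bound is too weak for a union bound over $n$ vertices. My first attempt would be the submartingale bound of Lemma~\ref{lem:azuma}. Reveal $y_w$ for $w\notin N$ first, then reveal the $y_u$, $u\in N$, one at a time. Replace $J$ by the smaller, monotone set $J_v$ of indices $k$ for which the count in \eqref{eq:pickingZ} stays at most $\eps n/(d+1)$ even after adding the ``worst case'' contribution of $N\cap S_k$, estimated by $2p|N\cap S_k|$ plus a Chernoff error. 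Since $\sum_k|N\cap S_k|\leq (t+2)^4 d$, only a small fraction (weighted by $|N\cap W_k|$) of indices can have $|N\cap S_k|$ large enough to matter. For the remaining $u$, the conditional probability of the event ``$y_u\leq p$ and $k(u)\in J_v$'' given the earlier exposures is at least $p(1-2\delta)$. Lemma~\ref{lem:azuma} then gives at least $pd/8$ such $u$ with failure probability $\exp(-\Omega(pd))$.

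If controlling the pots that $N$ meets heavily proves too delicate, the fallback is a bounded-differences argument (Lemma~\ref{lem:mcd}) for the number of discarded $u$. Changing one $y_w$ alters $J$ in at most $(t+2)^4$ indices, so this count is Lipschitz only with a constant depending on how heavily $N$ meets those pots. Hence that fallback would also need the same heavy-pot bookkeeping, and I expect this bookkeeping to be the real work in the proof.
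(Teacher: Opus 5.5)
Your two-stage structure is fine (the orientation is independent of $Z$), and your easy case is essentially the paper's. In fact your own bound $\exp(-c\eps n/(d+1))$ already gives $J=[0,d]$ with high probability for all $d\le n/\log^3 n$, not just $d\le n^{1-\Omega(1)}$. The genuine gap is the remaining range. You rightly flag it as the crux, but you do not prove it, and the Azuma route you sketch fails.

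The inference from $\sum_k|N\cap S_k|\le (t+2)^4d$ that only a small weighted fraction of indices have $|N\cap S_k|$ ``large enough to matter'' does not follow. That bound only says $|N\cap S_k|$ is $O(t^4)$ on average. When $d=\Theta(\lambda n)$, the threshold $\eps n/(d+1)\approx \eps/\lambda$ in \eqref{eq:pickingZ} is a \emph{constant}. So essentially every index can be heavy. For instance, if $N$ meets each pot $W_k$ with $k$ in an interval of length $\Theta(d)$ in about one vertex, then $|N\cap S_k|=\Theta(t^4)$ for most such $k$, and nothing in \ref{prop:corr:smallcorrs}--\ref{prop:corr:options2} rules this out.

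Replacing the worst case by ``$2p|N\cap S_k|$ plus a Chernoff error'' does not help. With a constant threshold, the probability that $N$ alone pushes a given $S_k$ over half the threshold is only polylogarithmically small, not $o(1/n)$. So you cannot guarantee $J_v\subseteq J$ with the per-vertex failure probability that the union bound over $v$ needs. The conditional bound $p(1-2\delta)$ is also left unjustified.

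The missing observation is that your bounded-differences fallback needs no bookkeeping about how $N$ meets the pots. Take $w\in W_i$. Changing $y_w$ can only change the $Z$-membership of vertices in $\bigcup_{j\in I+I-I-I}W_{i-j}$, using that $I+I-I-I$ is symmetric and contains $0$. By \ref{prop:corr:goodpots} this set has size at most $2(t+2)^4n/(d+1)$. Hence $|N\cap Z|$ is $K$-Lipschitz in the $n$ independent variables $y_w$, with $K=2(t+2)^4n/(d+1)$ uniformly in $v$. When $d>n/\log^3 n$ this gives $K\le \log^8 n$.

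For the mean, fix $z\in N$ with $z\in W_k$. The event $z\in Z$ is the intersection of two independent events:
\begin{itemize}
\item $y_z\le p$;
\item at most $\eps n/(d+1)-1$ vertices $u\in\bigcup_{j\in I+I-I-I}W_{k-j}\setminus\{z\}$ have $y_u\le p$.
\end{itemize}
The second has probability at least $1/2$ by Markov's inequality, so $\mathbb{E}|N\cap Z|\ge p|N|/2\ge pd/4$.

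Lemma~\ref{lem:mcd} with deviation $pd/8$ then bounds the failure probability by $2\exp(-(pd/8)^2/(2K^2n))$. This is $\exp(-n/\mathrm{polylog}(n))$, because $pd\ge n/\mathrm{polylog}(n)$ in this range. That gives your Stage 1 for every $v$ simultaneously, and it is exactly how the paper handles $d>n/\log^3 n$.
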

\claimproofstart[Proof of Claim~\ref{clm:directededges}] 
Let $v\in V(G)$ with $\psi(v)>0$. By \ref{prop:corr:options1}, we have that $v$ has at least $d/2$ neighbours in $H_0$.
Therefore, by Lemma~\ref{lem:chernoff}, with probability $1-\exp(-\Omega(d))=1-o(n^{-1})$, $v$ has at least $d/5$ 
out-neighbours in $\vec{H}_0$. 
Let $Z_v$ be the set of out-neighbours of $v$ in $\vec{H}_0$ that are contained in $Z$. We will show that $|Z_v|\geq d^{0.8}$ with probability $1-o(n^{-1})$.

Suppose\footnote{This case is contained already within the Fox-Luo-Pham result \cite{fox2024random}, but as it requires only an additional brief, elementary argument we will cover it using our methods as well.} that $d\le n/\log^{3} n$.  The left hand side of \eqref{eq:pickingZ} has mean $O(pt^4n/d)=o(\eps n/(d+1))$, while $pt^4n/d=\eps n/dt\geq \eps \log^2n=\omega(\log n)$. Thus, by Lemma~\ref{lem:chernoff}, with probability $1-o(n^{-1})$ we have  that \eqref{eq:pickingZ} holds for all $i\in [0,d]$ and hence $J=[0,d]$. Therefore,  $Z_v=\{u\in N^+_{\vec{H}_0}(v):y_u\leq p\}$ with probability $1-o(n^{-1})$. 
Hence, as $|\{u\in N^+_{\vec{H}_0}(v):y_u\leq p\}|$ is binomially distributed with mean $p d^+_{\vec{H}_0}(v)\ge pd/5$, where $pd=\eps d/t^5= \omega(d^{0.8})=\omega(\log n)$ (as $d=\omega(\log^{25}n)$), by Lemma~\ref{lem:chernoff} we have that $|Z_v|\geq d^{0.8}$ with probability $1-o(n^{-1})$. 

So, suppose $d> n/\log^{3} n$. For each $u\in V(G)$, changing $y_u$ can change $|Z_v|$ by at most $2(t+2)^4\cdot n/(d+1)$. Indeed, if $i\in [0,d]$ is such that $u\in W_{i}$, then this can only change whether a vertex $w$ is in or out of $Z$ if $w\in \cup_{j\in I+I-I-I}W_{i-j}$, where this set has size at most $(t+2)^4\cdot 2n/(d+1)$. Thus, letting $K=\log^{8}n$, as $d\ge n/\log^{3} n$ implies that $2(t+2)^4\cdot n/(d+1)\leq K$, we have that $|Z_v|$ is $K$-Lipschitz.

Now, consider an out-neighbour $z$ of $v$ in $\vec{H}_0$. We will bound below the probability of ``$z \in Z_v$'', which is equivalent to ``$z\in Z$''.  Let $i= d_{H_0}(z)-\psi(z)$, so that $z\in W_i$. By definition, the event ``$z\in Z$''  is  the  intersection of ``$y_z\le p$'' and ``$|\{u\in \bigcup_{j\in I+I-I-I}W_{i-j}:y_u\leq p\}|\leq \frac{\eps n}{d+1}$''. Since $z\in W_i\subset \bigcup_{j\in I+I-I-I}W_{i-j}$, this is exactly the intersection of the independent events ``$y_z\le p$'' and  ``$|\{u\in \bigcup_{j\in I+I-I-I}(W_{i-j}\setminus \{z\}):y_u\leq p\}|\leq \frac{\eps n}{d+1}-1$''.  Note that the random variable $|\{u\in \bigcup_{j\in I+I-I-I}(W_{i-j}\setminus \{z\}):y_u\leq p\}|$ is binomially distributed with mean $\le p\cdot 2(t+2)^4n/(d+1)=2\eps n(t+2)^4/(t^5(d+1))\le (\frac{\eps n}{d+1}-1 )/2$. By Markov's inequality, we have  $\P(|\{u\in \bigcup_{j\in I+I-I-I}(W_{i-j}\setminus \{z\}):y_u\leq p\}|> \frac{\eps n}{d+1}-1)\le 1/2$. Combining these observations, we get
\begin{align*}
\P(z\in Z_v)&= \P(y_z\leq p)\cdot \P\bigg(\Big|\Big\{u\in \bigcup_{j\in I+I-I-I}(W_{i-j}\setminus \{z\}):y_u\leq p\Big\}\Big|\leq \frac{\eps n}{d+1}-1\bigg)\ge \frac{p}{2}.
\end{align*}
Thus, $\mathbb{E}|Z_v|\ge pd/10$. As $d> n/\log^{3}n$, $K=\log^{8}n$, and $Z_v$ is $K$-Lipschitz, by Lemma~\ref{lem:mcd} with $t'=pd/20$, we have $|Z_v|\geq pd/20\geq d^{0.8}$ with probability $\geq 1-2\exp(-{(pd/20)^2}/{2K^2n}) = 1-o(n^{-1})$. 

Therefore, whether $d\leq n/\log^{3}n$ or $d> n/\log^{3}n$, \ref{prop:outneighboursplus} holds with high probability  by a union bound. That \ref{prop:outneighboursminus} also holds with high probability follows by a symmetric argument.
\claimproofend

Using Claim~\ref{clm:directededges}, take an outcome of $Z$ and $\vec{G}$ for which \ref{prop:outneighboursplus} and \ref{prop:outneighboursminus} hold. Let $G_0=G\setminus H_0$. Let $r=|Z|$ and $Z=\{v_1,\dots,v_r\}$.
We will build graphs $H_0\supset H_1\supset \dots \supset H_r$ and $G_0\supset G_1\supset \dots \supset G_r$, all with vertex set $V(G)$. For each $i\in [0,r]$ and $v\in V(G)$, set $f_i(v)= d_{\vec{H}_0\setminus \vec{H}_{i}}^+(v)-d_{\vec{G}_0\setminus\vec{G}_{i}}^+(v)$, noting that $f_0(v)=0$. 
For each $i\in [r]$ in turn, we build $H_{i}$ and $G_i$ from $H_{i-1}$ and $G_{i-1}$ as follows.
\begin{enumerate}[label = \alph{enumi})]
\item Let $m_i^+$ be the number of vertices $v\in V(G)$ with $\vec{vv_i}\in E(\vec{H}_{i-1})$ and $f_{i-1}(v)<\psi(v)$.\label{step:a} 
\item Let $n_i^+=\max\{s\in I:s\leq m_i^+\}$. Uniformly at random, pick $n_i^+$ vertices $v$ with $\vec{vv_i}\in E(\vec{H}_{i-1})$ and $f_{i-1}(v)<\psi(v)$, remove each edge $\vec{vv_i}$ from $\vec{H}_{i-1}$, and call the resulting oriented graph $\vec{H}_i$.\label{step:b}
\item Let $m_i^-$ be the number of vertices $v\in V(G)$ with $\vec{vv_i}\in E(\vec{G}_{i-1})$ and $f_{i-1}(v)>\psi(v)$.\label{step:c}
\item Let $n_i^-=\max\{s\in I:s\leq m_i^-\}$. Uniformly at random, pick $n_i^-$ vertices $v$ with $\vec{vv_i}\in E(\vec{G}_{i-1})$ and $f_{i-1}(v)>\psi(v)$, remove each edge $\vec{vv_i}$ from $\vec{G}_{i-1}$, and call the resulting oriented graph $\vec{G}_i$.\label{step:d}
\end{enumerate}

The following three properties follow directly from this process.
\begin{enumerate}[label = {\textbf{F\arabic{enumi}}}]
\item For each $v\in V(G)\setminus Z$, we have $d_{\vec{H}_0\setminus \vec{H}_r}^-(v)=d_{\vec{G}_0\setminus \vec{G}_r}^-(v)=0$.\label{prop:final:1}
\item For each $v\in Z$, we have $d_{\vec{H}_0\setminus \vec{H}_r}^-(v)\in I $ and $d_{\vec{G}_0\setminus \vec{G}_r}^-(v)\in I$.\label{prop:final:2}
\item For each $i\in [r]$ and $v\in V(G)$, the following hold. \label{prop:final:2.5}
\begin{itemize}
    \item If $f_{i-1}(v)<\psi(v)$, then $f_{i}(v)\in \{f_{i-1}(v), f_{i-1}(v)+1\}$.
    \item If $f_{i-1}(v)>\psi(v)$, then $f_{i}(v)\in \{f_{i-1}(v), f_{i-1}(v)-1\}$.
    \item If $f_{i-1}(v)=\psi(v)$, then $f_{i}(v)=f_{i-1}(v)$.
\end{itemize}
\end{enumerate}

We now show that the final property we need for $\vec{H}_r$ and $\vec{G}_r$ is likely to hold.

\begin{claim} With high probability, the following holds.  \label{clm:propsofHrGr}
\begin{enumerate}[label = {\textbf{F\arabic{enumi}}}]\addtocounter{enumi}{3}
\item For each $v\in V(G)$, we have $d_{\vec{H}_0\setminus \vec{H}_{r}}^+(v)-d_{\vec{G}_0\setminus\vec{G}_{r}}^+(v)=f_{r}(v)=\psi(v)$.\label{prop:final:3}
\end{enumerate}
\end{claim}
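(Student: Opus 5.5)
We prove \ref{prop:final:3} for vertices $v$ with $\psi(v)>0$; the case $\psi(v)<0$ is symmetric, using $\vec{G}_0$ and steps \ref{step:c}--\ref{step:d}, and $\psi(v)=0$ is immediate from \ref{prop:final:2.5}. First note a decoupling. Edges removed from $\vec{H}_{i-1}$ are out-edges of vertices $v$ with $f_{i-1}(v)<\psi(v)$. Edges removed from $\vec{G}_{i-1}$ are out-edges of vertices with $f_{i-1}(v)>\psi(v)$. By \ref{prop:final:2.5}, $f_i(v)$ moves monotonically towards $\psi(v)$ and never overshoots. So for $\psi(v)>0$ we always have $f_{i-1}(v)\in[0,\psi(v)]$, only steps \ref{step:a}--\ref{step:b} affect $v$, and $f_i(v)=d^+_{\vec{H}_0\setminus\vec{H}_i}(v)$. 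Thus it suffices to show that, with high probability, every $v$ with $\psi(v)>0$ reaches $f_r(v)=\psi(v)$. Then a union bound over the at most $n$ vertices gives the claim.

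Fix $v$ with $\psi(v)>0$. By \ref{prop:outneighboursplus} it has at least $d^{0.8}$ out-neighbours in $Z$ in $\vec{H}_0$. List these as $v_{i_1},\dots,v_{i_m}$ with $i_1<\dots<i_m$ and $m\geq d^{0.8}$. Consider step $i_k$ and condition on the whole history up to step $i_k-1$. If $f_{i_k-1}(v)<\psi(v)$, then $v$ is one of the $m_{i_k}^+\geq 1$ eligible vertices at $v_{i_k}$. The edge $\vec{vv_{i_k}}$ is still present, since only step $i_k$ removes edges into $v_{i_k}$. Now $n^+_{i_k}$ is the largest element of $I$ that is at most $m^+_{i_k}$, and $I$ contains all powers of $2$ up to $2^t$ with $t=\lfloor\log_2 d\rfloor$ while $m^+_{i_k}\leq d$. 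Hence $n^+_{i_k}>m^+_{i_k}/2$. So, conditionally on the history, $v$ is selected with probability at least $1/2$.

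Define Bernoulli variables $X_k$ for $k\in[m]$. Set $X_k=1$ if $v$ is selected at step $i_k$ or $f_{i_k-1}(v)=\psi(v)$ already. Then $\mathbb{E}(X_k\mid X_1,\dots,X_{k-1})\geq 1/2$ holds: condition on the full history and average. If $f_r(v)<\psi(v)$, then $v$ was never saturated, so every $X_k=1$ corresponds to a genuine increment of $f$. Therefore $\sum_k X_k\leq \psi(v)-1< d^{3/4}$ by \ref{prop:corr:smallcorrs}. Apply Lemma~\ref{lem:azuma} with $p=1/2$, $n=m$ and $t=m/2-d^{3/4}\geq m/4$. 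The probability of this event is at most $\exp(-\Omega(m))=\exp(-\Omega(d^{0.8}))=o(n^{-1})$, since $d=\omega(\log^{25}n)$. A union bound over $v$ then completes the proof.

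The main obstacle is making the conditional probability bound rigorous despite the dependencies. The ordering of $Z$ and the randomness at each step interact with all other vertices, so the $X_k$ are not independent. This is exactly why the submartingale form in Lemma~\ref{lem:azuma} is needed rather than a Chernoff bound. Care is also needed to check that the edge $\vec{vv_{i_k}}$ survives until step $i_k$, and that $v$ never becomes ineligible through overshooting.
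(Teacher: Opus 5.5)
Your proposal is correct and follows the paper's proof essentially step for step. You use the same no-overshoot reduction via F3, showing that only steps a)--b) affect $v$ when $\psi(v)>0$, and the same indicators (selected, or already saturated). You obtain the same conditional bound $n_i^+/m_i^+\geq 1/2$ by conditioning on the full history. You then apply Lemma~\ref{lem:azuma} with $|I_v|\geq d^{0.8}>4\psi(v)$ and finish with a union bound, exactly as the paper does, while also making explicit two points the paper leaves implicit: that $m_i^+\leq d<2^{t+1}$, and that $\vec{vv_{i_k}}$ survives until step $i_k$.
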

\claimproofstart[Proof of Claim~\ref{clm:propsofHrGr}] Let $v\in V(G)$. If $\psi(v)=0$, then $f_0(v)=\psi(v)$, and \ref{prop:final:2.5} gives $f_i(v)=\psi(v)$ for all $i\in [0,r]$, so that \ref{prop:final:3} holds for $v$. Suppose, then, that $\psi(v)>0$, where the case where $\psi(v)<0$ follows similarly. Since we have $\psi(v)>0=f_0(v)$, it follows from \ref{prop:final:2.5} that  $\psi(v)\ge f_i(v)$ for each $i\in [r]$. Thus, $v$ never has an out-edge deleted at \ref{step:d} for any $i\in [r]$, and so $d^+_{\vec{G}_0\setminus \vec{G}_r}(v)=0$ which gives $f_{r}(v)=d^+_{\vec{H}_0\setminus \vec{H}_r}(v)$.

Let $I_v=\{j:\vec{vv_j}\in E(\vec{H}_0)\}$ so that, by \ref{prop:outneighboursplus}, we have $|I_v|\geq d^{0.8}$.  Let $I_v=\{j_1, \dots, j_{|I_v|}\}$ with $j_1< \dots< j_{|I_v|}$.  For each $j\in I_v$, define an indicator random variable $X_j$ which is $1$ whenever the edge $\vec{vv_{j}}$ is deleted or $f_{j-1}(v)=\psi(v)$, and 0 otherwise. 
We will now show that, for each $i\in [|I_v|]$, $\mathbb E(X_{j_i}|X_{j_1},\dots,X_{j_{i-1}})\ge 1/2$. 

To this end, let $i\in [|I_v|]$, and let $S$ be the set of all $2(j_i-1)$-tuples which can represent in order an outcome for the edge sets chosen at step \ref{step:b} and step \ref{step:d} for each $h\in [j_i-1]$. For each $\mathbf{s}\in S$, let $E_\mathbf{s}$ be the event that these outcomes are chosen, so that the events $E_\mathbf{s}$, $\mathbf{s}\in S$, are disjoint and partition the whole space of possibilities. 
Then, letting $j=j_i$, $\P(X_{j}=1| E_\mathbf{s})$ can be bounded below as follows. Notice that the edge $\vec{vv_{j}}$ can only be deleted when $\vec{H}_{j}$ is created from $\vec{H}_{{j}-1}$. Therefore, if $\mathbf{s}$ is such that $f_{j-1}(v)<\psi(v)$ when $E_\mathbf{s}$ holds, then the probability that the edge $\vec{vv_{j}}$ is deleted is ${n^+_{j}}/{m^+_{j}}\geq 1/2$, i.e., $\P(X_{j}=1|E_\mathbf{s})\ge 1/2$. Also, if $\mathbf{s}$ is such that $f_{j-1}(v)=\psi(v)$, then $\P(X_{j}=1|E_\mathbf{s})=1\ge 1/2$.
Let $\mathbf{X}=(X_{j_1},\dots,X_{j_{i-1}})$. For each $\mathbf{x}\in \{0,1\}^{i-1}$ with $\mathbb{P}(\mathbf{X}=\mathbf{x})>0$, let $S_\mathbf{x}\subset S$ be the set of outcomes of choices before step $j_i$ for which ``$\mathbf{X}=\mathbf{x}$'' occurs. By the law of total probability, then,  
\[
\mathbb E(X_{j_i}|\mathbf{X}=\mathbf{x})=\P(X_{j_i}=1|\mathbf{X}=\mathbf{x})
=\sum_{s\in S_\mathbf{x}}\P(X_{j_i}=1|E_s)\cdot \P(E_s|\mathbf{X}=\mathbf{x})
\ge \sum_{s\in S_\mathbf{x}}\frac{1}{2}\cdot \P(E_s|\mathbf{X}=\mathbf{x})=\frac{1}{2}.
\]
Thus, we have $\mathbb E(X_{j_i}|X_{j_1},\dots,X_{j_{i-1}})\ge 1/2$ for each $i\in [|I_v|]$.

From \ref{prop:outneighboursplus} we had $|I_v|\geq d^{0.8}$, so, by Lemma~\ref{lem:azuma}, we thus have
$$\P\bigg(\sum_{j\in I_v}X_j\le \frac{|I_v|}{4}\bigg) \le \P\bigg(\sum_{j\in I_v}X_j\le \frac{|I_v|}{2}-\frac{|I_v|}{4}\bigg)\le \exp\left(\frac{-(|I_v|/4)^2}{2|I_v|/2+2|I_v|/4}\right)\le \exp\left(-\frac{d^{0.8}}{32}\right).$$
As $d=\omega(\log^{25}n)$, and $\psi(v)\leq d^{3/4}$ from \ref{prop:corr:smallcorrs}, with probability $1-o(n^{-1})$ we have $\sum_{j\in I_v}X_j> |I_v|/{4}\geq  d^{0.8}/4> \psi(v)\ge f_{r}(v)=d^+_{\vec{H}_0\setminus \vec{H}_r}(v)$. From the definition of  $X_{j_1}, \dots, X_{j_{|I_v|}}$, either $f_{i}(v)=\psi(v)$ for some $i\in [r]$  (which implies that $f_{r}(v)=\psi(v)$ via \ref{prop:final:2.5}), or else $\sum_{j\in I_v}X_j$ equals the number of out-edges deleted at $v$, i.e., $d^+_{\vec{H}_0\setminus \vec{H}_r}(v)=\sum_{j\in I_v}X_j$. The latter case contradicts  $\sum_{j\in I_v}X_j>d^+_{\vec{H}_0\setminus \vec{H}_r}(v)$, and so we always have $f_{r}(v)=\psi(v)$.
Thus, by a union bound over $v\in V(G)$, we have that \ref{prop:final:3} holds with high probability.
\claimproofend

Therefore, we can assume that \ref{prop:final:1}--\ref{prop:final:3} hold. Let $H=H_r+(G_0\setminus G_r)$ and let $V_i=\{v\in V(G):d_H(v)=i\}$ for each $i\in [0,d]$. We will show that $H\subset G$ has the property we want. Indeed, as $H_r\subset H_0$ is disjoint from $G_r\subset G_0=G\setminus H_0$, we have, for each $i\in [0,d]$ and $v\in W_i\setminus Z$, that
\[
d_H(v)=d_{H_0}(v)-d^+_{\vec{H}_0\setminus \vec{H}_r}(v)-d^-_{\vec{H}_0\setminus \vec{H}_r}(v)+d^+_{\vec{G}_0\setminus \vec{G}_r}(v)+d^-_{\vec{G}_0\setminus \vec{G}_r}(v)\overset{\ref{prop:final:1},\ref{prop:final:3}}{=}d_{H_0}(v)-\psi(v)=i.
\]
Thus, $W_i\setminus Z\subset V_i$. From \ref{prop:WZsmall} (using that $0\in I-I$), for each $i\in [0,d]$, $|W_i\cap Z|\leq \eps n/(d+1)$ and, hence, by \ref{prop:corr:goodpots}, $|V_i|\geq |W_i|-|Z\cap W_i|\geq (1-2\eps)n/(d+1)$.

Now, observe that, for each $i\in [0,d]$ and $v\in W_i\cap Z$,
\begin{align*}
d_H(v)&=d_{H_0}(v)-d^+_{\vec{H}_0\setminus \vec{H}_r}(v)-d^-_{\vec{H}_0\setminus \vec{H}_r}(v)+d^+_{\vec{G}_0\setminus \vec{G}_r}(v)+d^-_{\vec{G}_0\setminus \vec{G}_r}(v)\\
&\overset{\ref{prop:final:3}}{=}d_{H_0}(v)-\psi(v)-d^-_{\vec{H}_0\setminus \vec{H}_r}(v)+d^-_{\vec{G}_0\setminus \vec{G}_r}(v)=i-d^-_{\vec{H}_0\setminus \vec{H}_r}(v)+d^-_{\vec{G}_0\setminus \vec{G}_r}(v)\overset{\ref{prop:final:2}}{\in} \{i\}-I+I.
\end{align*}
Thus, for each $i\in [0,d]$, using \ref{prop:corr:goodpots} and \ref{prop:WZsmall} we get
\[
|V_i|\leq |W_i|+\sum_{j\in I-I}|Z\cap W_{i-j}|\leq \frac{(1+2\eps)n}{d+1}.
\]
Therefore, for each $i\in [0,d]$, there are $(1\pm 2\eps)n/(d+1)$ vertices in $H$ with degree $i$, as required.
\end{proof}


\end{document}